\newtheorem{theorem}{Theorem}[section]
\newtheorem{lemma}[theorem]{Lemma}
\newtheorem{corollary}[theorem]{Corollary}
\newtheorem{remark}[theorem]{Remark}
\newtheorem{defi}[theorem]{Definition}
\newcommand{\remove}[1]{}
\DeclareMathOperator{\conv}{\rm conv}
\DeclareMathOperator{\rank}{\mathrm{rank}}
\DeclareMathOperator{\area}{\mathrm{area}}
\DeclareMathOperator{\diameter}{\mathrm{diam}}
\DeclareMathOperator{\linea}{\rm line}
\DeclareMathOperator{\dist}{\rm dist}
\DeclareMathOperator{\relint}{\rm relint}
\title[Two-dimensional diophantine approximation]
{Diophantine approximation by almost equilateral triangles}
\author{Daniele Mundici}
\date{\today}
\address[D. Mundici]{Department of Mathematics and Computer Science
 ``Ulisse Dini'', 
University of Florence,
Viale Morgagni 67,
50134 Florence,
Italy}
\email{mundici@math.unifi.it}
\begin{document}

\begin{abstract}
A {\it two-dimensional 
continued fraction expansion}
is  a map $\mu$  
assigning to every  $x
\in\mathbb R^2\setminus\mathbb Q^2$
  a sequence  
$\mu(x)=T_0,T_1,\dots$  of triangles  
$T_n$ with  
vertices  $x_{ni}=(p_{ni}/d_{ni},q_{ni}/d_{ni})\in\mathbb Q^2,
d_{ni}>0,
p_{ni}, q_{ni}, d_{ni}\in \mathbb Z,$
$i=1,2,3$, such that  
\begin{eqnarray*}
\det
\left(
\begin{matrix}
p_{n1}& q_{n1} &d_{n1}\\
p_{n2}& q_{n2} &d_{n2}\\
p_{n3}& q_{n3} &d_{n3}
\end{matrix}
\right) = \pm 1\,\,\,
\,\,\,\mbox{and}\,\,\,\,\,\,
\bigcap_n T_n = \{x\}.
\end{eqnarray*}
We construct a
two-dimensional 
continued fraction expansion $\mu^*$
such that for densely many (Turing computable) points $x$ 
the vertices of the triangles of $\mu(x)$
strongly converge to $x$. 
Strong convergence depends  on  the value of
$\lim_{n\to \infty}\frac{\sum_{i=1}^3\dist(x,x_{ni})}{(2d_{n1}d_{n2}d_{n3})^{-1/2}},$
(``dist'' denoting euclidean distance)  which in turn depends on the smallest angle of  $T_n$.
Our  proofs combine
a classical  theorem of
Davenport Mahler in diophantine approximation,
with  the algorithmic
resolution of toric  singularities
in the equivalent  framework of regular fans
 and their stellar operations.
\end{abstract}



%
%
%
%
%
%

\thanks{2010 {\it Mathematics Subject Classification.}
Primary:   11A55.
Secondary:  
11B57,  11Y65,  11J70,  11K60,  14M25,  22F05,  37A20,   
37C85, 40A15.}

\keywords{Regular cone, unimodular cone, 
nonsingular fan, regular fan,  
two-dimensional continued fraction expansion,
simultaneous diophantine approximation, 
Farey continued fraction algorithm, 
Farey net, starring,  Farey mediant, Farey sum,
 Davenport-Mahler theorem,
Blichtfeld theorem, resolution of singularities,
desingularization.}

\maketitle

\section{Introduction and statement of the main results}

%

Following Lagarias \cite{lag}, by a {\it two-dimensional 
continued fraction expansion}
we mean a map $\mu$
assigning to every  $(\alpha,\beta)\in
\mathbb R^2\setminus\mathbb Q^2$
a  sequence  
$\mu(\alpha,\beta)$  of triangles  
$T_n\subseteq \mathbb R^2$ 
with  rational  vertices   
$(p_{ni}/d_{ni},q_{ni}/d_{ni}),\,\,\,
d_{ni}>0,\,$ $p_{ni},q_{ni},d_{ni}\in \mathbb Z,\,\,
\,\,i=1,2,3,$ \,\,$n=0,1,\dots$, such that
$\bigcap_nT_n=\{(\alpha,\beta)\}$ and 
\begin{eqnarray}
\label{equation:unimodular}
\det
\left(
\begin{matrix}
p_{n1}& q_{n1} &d_{n1}\\
p_{n2}& q_{n2} &d_{n2}\\
p_{n3}& q_{n3} &d_{n3}
\end{matrix}
\right) = \pm 1.
\end{eqnarray}

%
 
Triangles 
$T\subseteq \mathbb R^2$ with 
vertices  $(p_i/d_i,q_i/d_i)\in\mathbb Q,$
\,\,\,$d_i>0,\,\,i=1,2,3,$
having 
the unimodularity property 
\eqref{equation:unimodular} are said to be 
{\it regular},  being affine counterparts of  regular 
three-dimensional cones in
$\mathbb R^3,$ \cite[1.2.16]{cox}, \cite[p.146]{ewa}.
Regular cones, in turn, are 
the basic constituents of  regular  fans, i.e.,  
 complexes of regular cones, \cite[3.1.2]{cox},
\cite[V, 4.11]{ewa}.  Regular triangles, cones and fans will find  
use throughout this paper.

For any point  $x=(x_1,x_2)\in\mathbb R^2$ we
let  $G_{x}=\mathbb Zx_1+\mathbb Z x_2,+\mathbb Z$ be the subgroup of the additive group
$\mathbb R$ generated by $x_1,x_2, 1$. Thus
$\rank(G_x)=1$ iff $x\in \mathbb Q^2.$  
Points $x$
with $\rank(G_x)\in\{2\}$ are Lebesgue-negligible,
and their continued fraction  approximation
is a routine variant of 
the classical one.
We will mostly consider points 
$y=(\alpha,\beta)$ with $\rank(G_y)=3,$
called {\it rank 3 points.} Any such point 
 automatically lies in the interior
of every regular triangle containing it.

Suppose a rank 3 point $x$
 is approximated  by a sequence of  triangles  
  $R_0,R_1,\dots$.
Good approximations are in conflict with the
tendency of the   $R_n$ to
degenerate into needle-like triangles
like those of  \cite{beagar,gra,nog2}.
   %
   %
One may naturally wonder whether
 the many negative results on 
strong convergence of 
two-dimensional 
continued fraction expansions, \cite[p.22]{bre}
and  generalized 
Farey se\-q\-uen\-ces, 
\cite[4.1]{gra},
have a geometric counterpart 
in the fact that 
certain scalene  triangles are inevitable. For any triangle $T$
let $\diameter(T)$ be the  length 
of the longest side of $T$.
From a theorem of 
Davenport--Mahler \cite{davmah} 
we have a first partial answer:

\begin{theorem}
\label{theorem:uno}
Infinitely many rank 3 points  $x\in \mathbb R^2$ 
have the property  that
for every  
two-dimensional continued
fraction expansion
$\mu$,  no subsequence 
 $E_0 \supsetneqq E_1 \supsetneqq \dots$ of 
  $\mu(x)=T_0,T_1,\dots$ such that 
   the angles of every  $E_n$ are  $>\arcsin (23^{1/2} / 6)
   \approx   \pi /(3.3921424)\approx 53^\circ$,
  satisfies
 $\bigcap_n E_n = \{x\}$.
Thus, letting for  $i=1,2,3$,
$x_{ni}=(p_{ni}/d_{ni}, 
q_{ni}/d_{ni})$
denote  the $i$th vertex of  $T_{n}$,  
we have  
\begin{equation}
\label{equation:davenport}
\liminf_{n\to \infty}\,\frac{\sum_{i=1}^3\dist(x,x_{ni})}{(2d_{n1}d_{n2}d_{n3})^{-1/2}}
\geq
\liminf_{n\to \infty}\,\frac{\diameter(T_n)}{\area(T_n)^{1/2}}
\geq 
2\cdot\left(\frac{13}{23}\right)^{1/4}\approx 1.734138878.
\end{equation}  
\end{theorem}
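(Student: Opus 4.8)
The plan is to exhibit the points explicitly and to reduce to a single one. Let $\theta$ be the real root of $X^{3}-X-1$, so that $K=\mathbb{Q}(\theta)$ is the cubic field of discriminant $-23$ (one real and one complex place), with $\mathcal{O}_{K}=\mathbb{Z}[\theta]$ and norm form $N(a,b,c)=\prod_{j=1}^{3}(a+b\theta_{j}+c\theta_{j}^{2})$, whose value at any nonzero integer vector is a nonzero rational integer. Put $x^{(m,n)}=(\theta+m,\theta^{2}+n)$ for $(m,n)\in\mathbb{Z}^{2}$; each is a rank $3$ point, since $1,\theta,\theta^{2}$ are linearly independent over $\mathbb{Q}$. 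All the data appearing in the statement --- the $d_{ni}$, the distances $\dist(x,x_{ni})$, the areas, the angles, the determinant condition \eqref{equation:unimodular} --- are unchanged when $x$ and every $x_{ni}$ are translated by one and the same integer vector, so it suffices to prove the theorem for $x=x^{(0,0)}=(\theta,\theta^{2})$; its integer translates then supply infinitely many points.

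I would next dispose of the elementary geometry. Dividing the $i$-th row of the matrix in \eqref{equation:unimodular} by $d_{i}$ turns the left-hand determinant into $\pm2\area(T)$, so every regular triangle $T$ has $\area(T)=(2d_{1}d_{2}d_{3})^{-1}$ and the denominator in \eqref{equation:davenport} is simply $\area(T_{n})^{1/2}$. A longest side of $T_{n}$ joins two of its vertices, which by the triangle inequality are within $\sum_{i}\dist(x,x_{ni})$ of each other, so $\sum_{i}\dist(x,x_{ni})\ge\diameter(T_{n})$, which is the first inequality of \eqref{equation:davenport}. Finally, by the law of sines a triangle with angles $A\le B\le C$ satisfies $\diameter^{2}/\area=2\sin C/(\sin A\sin B)$, and a short optimization shows this is $\le 4\cot\varphi$ over all triangles whose smallest angle is $\ge\varphi$, with equality only for the isoceles triangle with base angles $\varphi$. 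Taking $\varphi=\arcsin(\sqrt{23}/6)$, so $\cos\varphi=\sqrt{13}/6$, $\cot\varphi=\sqrt{13/23}$, and $(4\cot\varphi)^{1/2}=2\,(13/23)^{1/4}$, I record the dichotomy: if $\diameter(T)^{2}>4\sqrt{13/23}\,\area(T)$, then the smallest angle of $T$ is $<\varphi$.

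The arithmetic core, and the place where \cite{davmah} is used, is the claim that $x=(\theta,\theta^{2})$ admits no ``fat'' regular approximating triangle: $\diameter(T)^{2}>4\sqrt{13/23}\,\area(T)$ for every regular $T$ with $x$ in its interior, the constant $4\sqrt{13/23}$ being best possible. The mechanism: write $v_{i}=(p_{i},q_{i},d_{i})$ and $\delta_{i}=d_{i}(x_{i}-x)=(p_{i}-d_{i}\theta,\;q_{i}-d_{i}\theta^{2})$, and for $i\neq j$ form $\xi_{ij}=(p_{i}q_{j}-q_{i}p_{j})+(q_{i}d_{j}-d_{i}q_{j})\theta+(d_{i}p_{j}-p_{i}d_{j})\theta^{2}\in\mathbb{Z}[\theta]$, which is nonzero because $v_{i},v_{j}$ are independent. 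A direct expansion gives, for the real embedding, $\sigma_{1}(\xi_{ij})=2d_{i}d_{j}\,s_{ij}$ where $s_{ij}$ is the signed area of the triangle $x,x_{i},x_{j}$; and for the complex embedding $|\sigma_{2}(\xi_{ij})|\le d_{i}d_{j}\bigl(C_{\theta}(\dist(x,x_{i})+\dist(x,x_{j}))+\dist(x,x_{i})\dist(x,x_{j})\bigr)$ with $C_{\theta}$ depending only on $\theta$. From $|\sigma_{1}(\xi_{ij})|\,|\sigma_{2}(\xi_{ij})|^{2}=|N(\xi_{ij})|\ge1$, multiplied over the three pairs and combined with $\area(T)=(2d_{1}d_{2}d_{3})^{-1}$ and $s_{12}+s_{23}+s_{31}=\area(T)$, one gets $\diameter(T)^{2}\ge c_{0}\area(T)$ for some $c_{0}>0$. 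The main obstacle is to carry this through with no loss, reaching the optimal $c_{0}=4\sqrt{13/23}$ together with the strict inequality --- equivalently, to identify the extremal shape as the isoceles triangle with base angles $\arcsin(\sqrt{23}/6)$. That identification is precisely the Davenport--Mahler determination that the critical determinant of the star body $|x|(y^{2}+z^{2})\le1$ equals $\sqrt{23}/2$, attained by the lattice of $\mathcal{O}_{K}$.

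It remains to assemble the pieces. By the core claim, every $T_{n}$ has $\diameter(T_{n})/\area(T_{n})^{1/2}>2\,(13/23)^{1/4}$, so $\liminf_{n}\diameter(T_{n})/\area(T_{n})^{1/2}\ge2\,(13/23)^{1/4}$; combined with $\sum_{i}\dist(x,x_{ni})\ge\diameter(T_{n})$ and $\area(T_{n})^{1/2}=(2d_{n1}d_{n2}d_{n3})^{-1/2}$, this is \eqref{equation:davenport}. For the first assertion, if a subsequence $E_{0}\supsetneqq E_{1}\supsetneqq\cdots$ of $\mu(x)$ had every $E_{n}$ with all angles $>\varphi$, then each $E_{n}$ is regular with $x$ interior, hence $\diameter(E_{n})^{2}>4\sqrt{13/23}\,\area(E_{n})$, and the dichotomy of the second paragraph forces the smallest angle of $E_{n}$ below $\varphi$ --- a contradiction. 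Thus only finitely many triangles of $\mu(x)$ have all their angles exceeding $\varphi$; in particular no subsequence of $\mu(x)$ of that kind exists, a fortiori none with $\bigcap_{n}E_{n}=\{x\}$.
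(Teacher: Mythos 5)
Your framing is sound in several places: the choice of $(\theta,\theta^{2})$ with $\theta^{3}=\theta+1$ (the discriminant $-23$ field is indeed the source of the Davenport--Mahler constant), the reduction to a single point by integer translations, the identity $\area(T)=(2d_{1}d_{2}d_{3})^{-1}$, the triangle inequality for the first inequality of \eqref{equation:davenport}, and the optimization $\sup\{\diameter^{2}/\area\}=4\cot\varphi$ over triangles with smallest angle $\geq\varphi$, attained only by the isosceles triangle with base angles $\varphi$. But the entire content of the theorem sits in your ``arithmetic core'', and that step is asserted rather than proved. You claim that \emph{every} regular triangle with $x$ in its interior satisfies the \emph{strict} inequality $\diameter(T)^{2}>4\sqrt{13/23}\,\area(T)$ --- at the \emph{critical} constant and with no exceptions --- and you yourself flag ``carrying this through with no loss'' as the main obstacle before deferring it to the value $\sqrt{23}/2$ of a critical determinant. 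That deferral does not work: Davenport--Mahler's Theorem 2(b) delivers a finiteness statement (for each $c<2/\sqrt{23}$ only finitely many integer triples satisfy $|a\alpha+b\beta+c|<c/(a^{2}+b^{2})$), not an exceptionless inequality, and not one valid at $c=2/\sqrt{23}$ itself. Nothing you write excludes a few small-denominator fat regular triangles around $x$, nor explains how the constant survives intact in passing from the linear-form inequality to the triangle-shape inequality. Your proposed mechanism (multiplying $|N(\xi_{ij})|\geq 1$ over the three pairs and bounding the complex embeddings by distances) does yield \emph{some} $c_{0}>0$ after an AM--GM step, but the three norm inequalities enter with $d_{i}^{3}d_{j}^{3}$ in the denominators while the area carries only $d_{1}d_{2}d_{3}$, and it is far from evident that this route reaches the sharp $4\sqrt{13/23}$.

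The paper closes exactly this gap with a different decomposition. For a regular cone with matrix $M$ it passes to the transpose inverse $L$ (the dual cone), whose rows $(a_{i},b_{i},c_{i})$ are the linear forms vanishing on the sides; the identities $\dist(u_{j},u_{k})=\delta_{i}/(r_{j}r_{k})$, $\dist(u_{i},\linea(u_{j},u_{k}))=1/(r_{i}\delta_{i})$ and $\sin\theta_{i}=r_{i}/(\delta_{j}\delta_{k})$ with $\delta_{i}=(a_{i}^{2}+b_{i}^{2})^{1/2}$, together with the fact that the three sub-triangles at $x$ partition $T$, show that if \emph{all three} angles exceed $\theta$ then \emph{at least one} row of $L$ satisfies $|a_{i}\alpha+b_{i}\beta+c_{i}|<\tfrac{1}{3\sin\theta}(a_{i}^{2}+b_{i}^{2})^{-1}$; the factor $3$ comes from $\sum_{i}\delta_{j}\delta_{k}/\delta_{i}^{2}\geq 3$, and $3\sin\theta>\sqrt{23}/2$ is precisely where $\arcsin(\sqrt{23}/6)$ enters. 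An infinite strictly decreasing sequence of fat regular cones then produces infinitely many such triples, contradicting the Davenport--Mahler finiteness; a single fat triangle contradicts nothing. So the correct conclusion --- and the one the theorem actually states --- concerns infinite nested subsequences, not individual triangles. To repair your argument you would need to (i) replace the exceptionless claim by its finiteness version, (ii) prove the sharp quantitative link between ``all angles $>\theta$'' and a linear-form approximation of quality $1/(3\sin\theta)$, which your norm-product sketch does not supply, and (iii) rephrase the assembly so that the contradiction comes from infinitude rather than from a single triangle.
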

 
And yet,  toric resolution of singularities
 \cite{cox}, \cite{ewa}---in
  the equivalent algorithmic-combinatorial framework
\cite{agumun}
of desingularization of fans in $\mathbb R^3$---yields:
\begin{theorem}
\label{theorem:due}
For all $\epsilon>0$ there is 
a two-dimensional continued fraction expansion
$\mu_\epsilon$
having the following property: for densely many
rank 3 points  $x\in \mathbb R^2$, 
the sequence $\mu_\epsilon(x)=E_{\epsilon 0},
E_{\epsilon 1},\dots$ satisfies 
\begin{equation}
\label{equation:epsilon}
\lim_{n\to \infty}\,\frac{\max_{i=1,2,3}\dist(x,v_{ni})}
{(2d_{\epsilon n1}d_{\epsilon n2}d_{\epsilon n3})^{-1/2}}
\leq
\lim_{n\to \infty}\,\frac{\diameter(E_{\epsilon n})}
{\area(E_{\epsilon n})^{1/2}}
< 2\cdot \left(\frac{1}{3}\right)^{1/4}+\epsilon
 \approx  1.52+\epsilon,
\end{equation}
%
%
with  $v_{ni}=(p_{\epsilon ni}/d_{\epsilon ni}, 
q_{\epsilon ni}/d_{\epsilon ni})$
the $i$th vertex of  the triangle $E_{\epsilon n}$,
$(i=1,2,3)$. 
\end{theorem}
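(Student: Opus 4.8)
The goal is to produce, for each $\epsilon>0$, a two-dimensional continued fraction expansion $\mu_\epsilon$ such that on a dense set of rank $3$ points the triangles $E_{\epsilon n}$ satisfy $\diameter(E_{\epsilon n})/\area(E_{\epsilon n})^{1/2}\to$ something below $2\cdot 3^{-1/4}+\epsilon$. The number $2\cdot 3^{-1/4}$ is exactly the ratio $\diameter/\area^{1/2}$ of an equilateral triangle (side $s$: $\diameter=s$, $\area=\sqrt3 s^2/4$, ratio $=2/3^{1/4}$), so the content is: we can force the triangles to be \emph{arbitrarily close to equilateral in shape} while remaining regular (unimodular) and shrinking to the point. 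The first inequality in \eqref{equation:epsilon} is trivial since $\max_i\dist(x,v_{ni})\le\diameter(E_{\epsilon n})$, so all the work is in the second inequality together with $\bigcap_n E_{\epsilon n}=\{x\}$.

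**Step 1: the combinatorial engine.** I would work in the equivalent language of regular fans in $\mathbb R^3$ and their stellar subdivisions (starrings at Farey mediants), as set up in \cite{agumun}, \cite{cox}, \cite{ewa}: a regular triangle with vertices $(p_i/d_i,q_i/d_i)$ corresponds to a regular $3$-cone spanned by the primitive vectors $(p_i,q_i,d_i)$, condition \eqref{equation:unimodular} is exactly regularity of the cone, and starring at the primitive generator of a face replaces one triangle by two (or a smaller triangle) while preserving regularity. Any point $x$ in the interior of a regular triangle lies in the interior of exactly one cone of any regular subdivision, so iterating starrings produces a nested sequence of regular triangles around $x$; by the algorithmic resolution/refinement results these can be driven to have arbitrarily small diameter, giving $\bigcap_n E_n=\{x\}$. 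The new ingredient needed here is \emph{control of the shape}, not merely the size, of the produced triangles.

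**Step 2: shape control via a "good" region and Davenport–Mahler in reverse.** The idea is to first exhibit \emph{one} regular triangle $T^*$ whose shape ratio $\diameter(T^*)/\area(T^*)^{1/2}$ is as close to $2\cdot3^{-1/4}$ as we like — this is an arithmetic construction: pick three lattice vectors in $\mathbb Z^3$ with determinant $\pm1$ whose projections $(p_i/d_i,q_i/d_i)$ form an almost-equilateral triangle (e.g.\ perturb an equilateral configuration and clear denominators, using that $\mathrm{SL}_3(\mathbb Z)$ acts with dense orbits on suitable configurations). Then I would show that inside $T^*$ one can perform starrings that produce a \emph{self-similar-in-shape} nested family: each child triangle is, up to a unimodular affine map of $\mathbb R^2$ (which preserves the ratio $\diameter/\area^{1/2}$ only approximately, so one must track the distortion), again almost equilateral with the same quality bound. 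Equivalently, in the fan picture, one finds a finite set of regular cones, each nearly "round," that is closed under the relevant starring operations up to $\mathrm{SL}_3(\mathbb Z)$-equivalence, so that the shape ratio stays in a fixed small neighborhood of $2\cdot3^{-1/4}$ forever. The rank $3$ points $x$ for which this works are those lying in the interior of $T^*$ and never hitting a lower-dimensional face — a dense (indeed full-measure within $T^*$) set, and one checks it contains Turing-computable points as claimed in the abstract by choosing $x$ with an explicitly prescribed itinerary through the finitely many cone-types.

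**The main obstacle.** The hard part is Step 2: guaranteeing that the starring operation does not inexorably elongate the triangles. A single Farey mediant starring of an equilateral-ish triangle generically produces two triangles, one of which is thinner; the challenge is to choose \emph{which} face to star at each stage (a strategy depending on $x$) so that the bad elongation is never allowed to accumulate — one needs a potential-function argument showing that the worst shape ratio over the current triangle is non-increasing (or increases by a summably small amount, hence stays below $2\cdot3^{-1/4}+\epsilon$), while simultaneously the diameter decreases geometrically so that $\bigcap E_n=\{x\}$. Making these two requirements compatible is exactly where the quantitative resolution-of-singularities input is used: the algorithmic desingularization of \cite{agumun} terminates with a bound on the number of starrings, which caps how much shape distortion can build up, and a diophantine input (a Davenport–Mahler-type lower bound on how close $x$ can be to the faces, valid on a dense computable set) prevents degeneracy of the relevant determinants. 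Conversely one must ensure the construction is not obstructed by Theorem~\ref{theorem:uno}: that theorem forbids \emph{all} angles exceeding $53^\circ$ along a full nested subsequence at infinitely many special points, and one must check the dense set chosen here avoids those points — which is automatic, since those are $\liminf$ statements for \emph{every} expansion whereas here we build a \emph{specific} $\mu_\epsilon$ and a \emph{specific} dense set, with $2\cdot3^{-1/4}\approx1.52<1.73\approx 2(13/23)^{1/4}$, so there is no contradiction.
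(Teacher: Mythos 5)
Your overall frame (regular cones, Farey-mediant starrings, shape control of the slice triangles, the observation that $2\cdot 3^{-1/4}$ is the equilateral ratio, and the trivial first inequality from \eqref{equation:pqr}) matches the paper, but your Step 2 contains a genuine gap. You propose to fix a nearly equilateral regular triangle $T^*$ and find a finite family of ``round'' cone types closed under starring up to $\mathrm{SL}_3(\mathbb Z)$-equivalence, so that \emph{every} interior point off the faces (``a dense, indeed full-measure set'') inherits a nested sequence of nearly equilateral regular triangles. No such argument is given, and the mechanism is doubtful: a unimodular affine map of $\mathbb R^2$ preserves area but can distort diameters arbitrarily, so it does not ``approximately preserve'' the ratio $\diameter/\area^{1/2}$ at all, and your proposed potential-function / ``bounded number of starrings'' fix is not made precise. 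More importantly, claiming the good set has full measure in $T^*$ is a far stronger statement than the theorem asserts, is nowhere justified, and sits uneasily next to Theorem~\ref{theorem:uno}, which already produces infinitely many points that are $\theta$-inaccessible for $\theta$ well below $\pi/3$; the paper deliberately claims only density.

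The paper avoids your difficulty by inverting the quantifiers: it does not take a pre-given point $x$ and approximate it well, but \emph{constructs} the point together with its expansion. Inside each basic open set $\mathcal O$ it runs an explicit starring scheme (Lemma~\ref{lemma:second-bis}): first a ``slow continued fraction'' phase fixing one generator to create an angle $\zeta\in(\theta,\pi/3)$, then a two-parameter family $\sigma_{p,q}=\langle {\bf a}_m+p{\bf b}_m,\ {\bf c}_m+q{\bf b}_m,\ {\bf b}_m\rangle$ in which $(p',q')$ is chosen so that the other two angles approach $(\pi-\zeta)/2>\pi/3$; iterating, and dodging the rational lines $\Lambda_1,\dots,\Lambda_k$ at stage $k$ to force rank $3$, the nested cones close down on a single $\theta$-accessible ray. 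The resulting point $x_{\mathcal O,\theta}$ is the \emph{output} of the construction, one per basic open set, which is exactly what ``densely many'' requires; $\mu_\epsilon$ is then defined by this sequence on those points and arbitrarily (e.g.\ Brentjes) elsewhere. To repair your write-up you would need either to prove your self-similarity/full-measure claim (a substantially harder statement) or to adopt this existence-of-good-points strategy, in which the freedom to choose $(p',q')$ replaces your unproved shape-preservation argument.
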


\begin{corollary}
\label{corollary:tre}
There is a two-dimensional continued fraction expansion
$\mu^*$ such that for a dense set $\mathcal D$ of 
rank 3 points  $x=(\alpha,\beta)\in \mathbb R^2$,
all angles of every triangle of 
 the sequence $\mu^*(x) =E^*_0,  E^*_1,  \dots$ 
are $>\arcsin (23^{1/2} / 6)$.
Thus,
 \begin{equation}
 \label{equation:corollary}
 \lim_{n\to \infty}\,\frac{\max_{i=1,2,3}\dist(x,v^*_{ni})}
{(2d^*_{n1}d^*_{n2}d^*_{n3})^{-1/2}}
\leq
\lim_{n\to \infty}\,\frac{\diameter(E^*_{n})}
{\area(E^*_{n})^{1/2}}
 <  2\cdot\left(\frac{13}{23}\right)^{1/4},
 \end{equation}
with 
$v^*_{ni}=(p^*_{ni}/d^*_{ni}, q^*_{ni}/d^*_{ni})$
denoting $i$th vertex  of  the triangle $E^*_{n}$,
$(i=1,2,3)$. 
\end{corollary}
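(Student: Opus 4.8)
The plan is to take $\mu^{*}=\mu_{\epsilon_{0}}$ for one sufficiently small fixed $\epsilon_{0}>0$, with $\mathcal D$ the dense set of rank $3$ points supplied by Theorem~\ref{theorem:due} for this $\epsilon_{0}$. What has to be extracted from \eqref{equation:epsilon} is the \emph{angular} statement; the displayed chain \eqref{equation:corollary} then follows. The engine is an elementary identity: for a triangle $T$ with angles $\alpha_{1}\le\alpha_{2}\le\alpha_{3}$ and sides $a\le b\le c$ (so $c=\diameter(T)$ and the angle between $b$ and $c$ is $\alpha_{1}$),
\[
\frac{\diameter(T)^{2}}{\area(T)}=\frac{c^{2}}{\tfrac12 bc\sin\alpha_{1}}=\frac{2\sin\alpha_{3}}{\sin\alpha_{1}\sin\alpha_{2}}=2\bigl(\cot\alpha_{1}+\cot\alpha_{2}\bigr),
\]
where the middle equality is the law of sines ($c/b=\sin\alpha_{3}/\sin\alpha_{2}$) and the last uses $\sin\alpha_{3}=\sin(\alpha_{1}+\alpha_{2})$.

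Granting this, one half of \eqref{equation:corollary} is immediate. If every angle of $T$ exceeds $\alpha_{0}:=\arcsin(23^{1/2}/6)$, then $\alpha_{1},\alpha_{2}>\alpha_{0}$, so by monotonicity of $\cot$ we get $\diameter(T)^{2}/\area(T)<4\cot\alpha_{0}=4\,(13/23)^{1/2}$ — using $\sin\alpha_{0}=23^{1/2}/6$ and $\cos\alpha_{0}=13^{1/2}/6$ — i.e.\ $\diameter(T)/\area(T)^{1/2}<2\,(13/23)^{1/4}$. Hence, once all angles of every triangle $E^{*}_{n}$ are shown to exceed $\alpha_{0}$, the second inequality of \eqref{equation:corollary} holds; the first holds termwise, hence in the limit, because for a regular triangle $T$ one has $\area(T)=(2d_{1}d_{2}d_{3})^{-1}$ (so the denominators there are just $\area(T)^{1/2}$) while $\max_{i}\dist(x,v_{i})\le\diameter(T)$ since the rank $3$ point $x$ is interior to $T$ and $y\mapsto\dist(y,v_{i})$ is convex. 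So the real task is the angular claim.

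For that, suppose $\diameter(T)/\area(T)^{1/2}<2\cdot 3^{-1/4}+\epsilon$. By the identity, $\cot\alpha_{1}+\cot\alpha_{2}<\tfrac12\bigl(2\cdot 3^{-1/4}+\epsilon\bigr)^{2}=\tfrac{2}{\sqrt 3}+\eta(\epsilon)$ with $\eta(\epsilon)\to 0$ as $\epsilon\to 0$. I would now bring in the one constraint invisible to the ratio but forced by the geometry: since $c$ is the \emph{longest} side, $\alpha_{3}$ is the largest angle, whence $\alpha_{2}\le(\pi-\alpha_{1})/2$ and $\cot\alpha_{2}\ge\cot\bigl((\pi-\alpha_{1})/2\bigr)=\tan(\alpha_{1}/2)$. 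Writing $g(\alpha)=\cot\alpha+\tan(\alpha/2)$ we then have $g(\alpha_{1})\le\cot\alpha_{1}+\cot\alpha_{2}<\tfrac{2}{\sqrt3}+\eta(\epsilon)$, whereas $g(\pi/3)=\tfrac{1}{\sqrt3}+\tfrac{1}{\sqrt3}=\tfrac{2}{\sqrt3}$ and $g'(\alpha)=-\csc^{2}\alpha+\tfrac12\sec^{2}(\alpha/2)<0$ on $(0,\pi/2)$ (this is just $\sin^{2}(\alpha/2)<\tfrac12$ there). Strict monotonicity of $g$ therefore forces $\alpha_{1}>\pi/3-\theta(\epsilon)$ with $\theta(\epsilon)\to0$, and then all three angles of $T$ exceed $\pi/3-\theta(\epsilon)$. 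Since $23^{1/2}/6<\sqrt3/2=\sin(\pi/3)$ we have $\alpha_{0}<\pi/3$, so I fix $\epsilon_{0}>0$ small enough that $\pi/3-\theta(\epsilon_{0})>\alpha_{0}$; then every triangle with $\diameter/\area^{1/2}<2\cdot 3^{-1/4}+\epsilon_{0}$ has all angles above $\alpha_{0}$.

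It remains to pass from the \emph{limit} in \eqref{equation:epsilon} to \emph{every} triangle of $\mu^{*}(x)$, $x\in\mathcal D$. I would do this by noting that the construction behind Theorem~\ref{theorem:due} in fact yields, for each such $x$, a sequence \emph{all} of whose triangles already satisfy $\diameter(E_{\epsilon_{0}n})/\area(E_{\epsilon_{0}n})^{1/2}<2\cdot 3^{-1/4}+\epsilon_{0}$ — keeping the cross-sections of the regular cones uniformly near-equilateral along the stellar subdivisions is precisely the mechanism of that theorem — so the previous paragraph applies to every $E^{*}_{n}$; alternatively, one shrinks $\mathcal D$ to its still-dense subset lying in a fixed near-equilateral regular triangle and prepends that triangle together with finitely many near-equilateral refinements of it. Either way $\mathcal D$ and $\mu^{*}=\mu_{\epsilon_{0}}$ witness the Corollary. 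The delicate point — the only thing beyond Theorem~\ref{theorem:due} — is the angular estimate: the ratio $\diameter/\area^{1/2}$ lying near its global minimum $2\cdot 3^{-1/4}$ does \emph{not} by itself keep the smallest angle near $\pi/3$ (an isosceles triangle with apex angle $\approx 50^{\circ}$ has ratio $\approx 1.62<2\,(13/23)^{1/4}$), and one must exploit that the numerator is genuinely the longest side — the inequality $\cot\alpha_{2}\ge\tan(\alpha_{1}/2)$ — to rule such triangles out and clear the Davenport–Mahler barrier of Theorem~\ref{theorem:uno}.
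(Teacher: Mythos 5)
Your trigonometric core is sound and in places sharper than what the paper writes down: the identity $\diameter(T)^2/\area(T)=2(\cot\alpha_1+\cot\alpha_2)$ (with $\alpha_1\le\alpha_2$ the two smallest angles) yields in one line the paper's ``elementary geometry'' step that all angles $>\alpha_0:=\arcsin(23^{1/2}/6)$ force $\diameter/\area^{1/2}<2(13/23)^{1/4}$, and your remark that the $2(13/23)^{1/4}$ bound alone does \emph{not} control the smallest angle is correct and worth recording. But your overall route differs from the paper's and has one soft spot. The paper never deduces the angular statement from the ratio bound of Theorem~\ref{theorem:due}: it simply re-runs Lemma~\ref{lemma:second-bis} with the specific value $\theta^*=\alpha_0$ (the lemma works for every $\theta<\pi/3$, and $\alpha_0<\pi/3$), so that by construction \emph{every} triangle of $\mu^*(x)=\mu_{\theta^*}(x)$ has all angles $>\alpha_0$ --- indeed one angle in $(\theta^*,\pi/3)$ and two angles $>\pi/3$, by Claim~1(iii)--(iv) --- and the ratio bound is then a consequence, never a hypothesis. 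You instead start from the bare statement of Theorem~\ref{theorem:due}, whose conclusion \eqref{equation:epsilon} is only a limit, and must upgrade ``eventually'' to ``every triangle''. Your patch (a) is true but is precisely the content of Lemma~\ref{lemma:second-bis}; once you open that box you may as well take $\theta=\alpha_0$ directly and skip the ratio-to-angle deduction via $g(\alpha)=\cot\alpha+\tan(\alpha/2)$ altogether. Your patch (b) is the genuine weak point: prepending a near-equilateral regular triangle and ``finitely many refinements'' to the tail of $\mu_{\epsilon_0}(x)$ does not obviously produce a nested unimodular sequence, because the first retained triangle of $\mu_{\epsilon_0}(x)$ need not lie inside the last prepended one and you give no mechanism for splicing the two chains; the honest fix from the statement of Theorem~\ref{theorem:due} alone would be to \emph{discard} the finitely many offending initial triangles (a tail of an expansion is still an expansion), not to prepend. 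One small dividend of your detour: since you obtain the uniform bound ``all angles $>\pi/3-\theta(\epsilon_0)>\alpha_0$'', the \emph{strict} inequality on the limit in \eqref{equation:corollary} is automatic, whereas the paper's termwise ``$<2(13/23)^{1/4}$'' needs the (available, but unstated) uniform angular gap to conclude strictness in the limit.
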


\begin{corollary}
\label{corollary:quattro}
With the notation of  Corollary  \ref{corollary:tre}, let
$\rho\subseteq \mathbb R^3$
 be the half-line originating at
 $(0,0,0)$ and passing through $(\alpha,\beta,1)$. For
each $n=0,1,\dots$,  pick a vertex $v^*_n= (p^*_{n}/d^*_{n}, q^*_{n}/d^*_{n})$
of $E^*_n$ having smallest denominator.  Then the sequence
  $v^*_{0}, v^*_{1},\dots$ 
{\em strongly converges to $x$},
in the sense that  
 $\lim_{n\to \infty}\dist(\rho, (p^*_{n},
q^*_{n},d^*_{n}))=0.$ 
\end{corollary}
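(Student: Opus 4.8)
The plan is to reduce this three–dimensional claim to a two–dimensional one about the quality of the rational point $v^*_n$ as an approximant of $x$, and then to feed in Corollary~\ref{corollary:tre} together with the elementary observation that the denominators of the triangles $E^*_n$ blow up. The first step should be completely routine: since $d^*_n>0$, the point $d^*_n\,(\alpha,\beta,1)=(d^*_n\alpha,\,d^*_n\beta,\,d^*_n)$ lies on $\rho$, so
\[
\dist\bigl(\rho,\,(p^*_{n},q^*_{n},d^*_{n})\bigr)\ \le\ \bigl\|\,(p^*_{n}-d^*_n\alpha,\ q^*_{n}-d^*_n\beta,\ 0)\,\bigr\|\ =\ d^*_n\,\dist\bigl(x,v^*_n\bigr),
\]
and it therefore suffices to prove that $d^*_n\,\dist(x,v^*_n)\to 0$.

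For this I would combine two ingredients. First, since $v^*_n$ is one of the vertices of $E^*_n$ and the limiting constant $2(13/23)^{1/4}$ of Corollary~\ref{corollary:tre} is strictly below $2$, that corollary gives, for all large $n$,
\[
\dist(x,v^*_n)\ \le\ \max_{i=1,2,3}\dist(x,v^*_{ni})\ \le\ 2\,\bigl(2\,d^*_{n1}d^*_{n2}d^*_{n3}\bigr)^{-1/2}.
\]
Second, because $v^*_n$ has the smallest denominator among the three vertices, $d^*_n=\min_i d^*_{ni}\le(d^*_{n1}d^*_{n2}d^*_{n3})^{1/3}$. Multiplying the two displays and simplifying the exponent $\frac13-\frac12=-\frac16$,
\[
d^*_n\,\dist(x,v^*_n)\ \le\ 2\,\bigl(d^*_{n1}d^*_{n2}d^*_{n3}\bigr)^{1/3}\bigl(2\,d^*_{n1}d^*_{n2}d^*_{n3}\bigr)^{-1/2}\ =\ 2^{1/2}\,\bigl(d^*_{n1}d^*_{n2}d^*_{n3}\bigr)^{-1/6}.
\]

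It would then remain to see that $d^*_{n1}d^*_{n2}d^*_{n3}\to\infty$. I would argue this from the area identity for regular triangles: by the unimodularity \eqref{equation:unimodular}, $\area(E^*_n)=1/(2\,d^*_{n1}d^*_{n2}d^*_{n3})$ — this is precisely why the normalizing factor $(2d_{n1}d_{n2}d_{n3})^{-1/2}$ occurring throughout coincides with $\area(E^*_n)^{1/2}$. Since the sequence $\mu^*(x)=E^*_0,E^*_1,\dots$ is decreasing with $\bigcap_nE^*_n=\{x\}$, we have $\diameter(E^*_n)\to 0$; and as any triangle of diameter $D$ has area at most $\frac{\sqrt 3}{4}D^2$, this forces $\area(E^*_n)\to 0$, hence $d^*_{n1}d^*_{n2}d^*_{n3}\to\infty$. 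Plugging this into the last display yields $\dist(\rho,(p^*_{n},q^*_{n},d^*_{n}))\le 2^{1/2}(d^*_{n1}d^*_{n2}d^*_{n3})^{-1/6}\to 0$, which is the asserted strong convergence.

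The whole argument is essentially a repackaging of Corollary~\ref{corollary:tre}, so I do not anticipate any serious difficulty; the only point deserving a little care is the last step, the verification that $d^*_{n1}d^*_{n2}d^*_{n3}\to\infty$, which rests on the nestedness of $\mu^*(x)$ (a structural feature of the construction) and on the identity $\area(E^*_n)=1/(2d^*_{n1}d^*_{n2}d^*_{n3})$. Granting that, the remainder is just the elementary chain of inequalities above together with the exponent bookkeeping.
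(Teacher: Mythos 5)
Your argument is correct and follows essentially the same route as the paper: both reduce the claim to showing $d^*_n\,\dist(x,v^*_n)\to 0$, bound this via Corollary \ref{corollary:tre} by a constant times a negative power of the product of denominators, and conclude from the fact that the denominators blow up. The only cosmetic differences are that the paper exploits the ordering $d^*_{n,1}\le d^*_{n,2}\le d^*_{n,3}$ to get the bound $\mathsf{k}(2d^*_{n,3})^{-1/2}$ where you use $\min_i d^*_{ni}\le(d^*_{n1}d^*_{n2}d^*_{n3})^{1/3}$, and that the paper simply cites the construction for the divergence of the denominators where you derive it from nestedness and the area identity \eqref{equation:pqr} --- both are sound.
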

 Our continued fraction expansions 
 strongly converging over the dense set
$\mathcal D$
of Corollaries \ref{corollary:tre}-\ref{corollary:quattro} 
inherit from the Farey expansion   
 the following  properties:

{\it Approximation steps by Farey sums (=Farey mediants)}
\indent
Each  triangle
 $E^*_{n+1}$
in  $\mu^*(x)$ 
 is obtained from
 $E^*_{n}$   via finitely many  computations of 
 mediants of  pairs of vertices of consecutive triangles.
Thus, passing to homogeneous integer coordinates
 of the vertices of the $E^*_{n}$, , $\mu^*(x)$ is an ``expansion''
 along the ray $\rho$ in the
more restrictive sense of  Brentjes \cite[2.3]{bre} and \cite[p.21]{bre-crelle}.

{\it Turing computability}
  $\mathcal D$ contains a  
  subset $\mathcal D'$ of points, also dense
  in $\mathbb R^2$, 
   which are  the output of an enumerating Turing machine. 
 (See Remark \ref{remark:first}.) Thus for any  $y\in \mathcal D'$
 we have a two-dimensional continued fraction algorithm
 in the sense of \cite[p.21]{bre-crelle}

Farey sums, unimodularity, computability issues,  angles, 
expansions, 
and  the estimates 
\eqref{equation:davenport}-\eqref{equation:corollary}
are not needed for the
   proof of the main result of \cite{mee},
stating that 
 the set 
of points  for which strong convergence fails is
Lebesgue-negligible.

  
Finally, in  Remark \ref{remark:second}, 
our Corollary  \ref{corollary:quattro}
is comparatively discussed  with Grabiner's 
\cite[Theorem 4.1]{gra} stating 
that  for  all $n=2,3,\dots,$ no  
$n$-dimensional Farey continued fraction algorithm
is strongly convergent.  

\section{Proof of Theorem \ref{theorem:uno}}
\label{section:theta}
Following  \cite[p.29]{cox}, a  {\it ray}
$\rho$ in ${\mathbb R}^3$ is a  half-line having the origin  
${\mathbf 0}=(0,0,0)$ as its extremal point. Thus for some
nonzero vector $ {\bf w}= (x,y,z) \in {\mathbb R}^3$ 
we may write
\begin{equation}
\label{equation:monocone}
\rho=\langle {\bf w}\rangle=  
\{\lambda {\bf w}\in \mathbb R^3 \mid  0  \leq\lambda \in {\mathbb R} \}.
\end{equation}
A nonzero integer vector  
${\bf v}= (a,b,c)\in{\mathbb Z}^3\subseteq \mathbb R^3$ is said to be 
{\it primitive} if  $\gcd(a,b,c)=1$, \cite[V, 1.10]{ewa}.
 In other words,
moving  from ${\mathbf 0}$ along the ray  
$\langle \bf v \rangle$, 
$\bf v$ is the first  integer point $\not=\mathbf 0$. 
Following \cite[1.2.14, 1.2.16]{cox} 
by a  {\it rational, three-dimensional,
simplicial  cone}  (``simplex cone'', or ``simple cone'' in \cite[V, 1.8]{ewa})
 in ${\mathbb R}^3$
we mean a set $\sigma\subseteq \mathbb R^3$ of the form  
\begin{equation}
\label{equation:cone}
\sigma = \langle{\bf v}_{1},  {\bf v}_{2},  {\bf v}_{3}\rangle
=
 \left \{\sum_{i=1}^3 \lambda _{i}{\bf v}_{i}
\mid  0\leq \lambda_{i}\in {\mathbb R}\right\},
\end{equation}
for  primitive  
vectors ${\bf v}_{1},{\bf v}_{2},{\bf v}_{3} \in {\mathbb Z}^{3}$ 
whose linear span coincides with ${\mathbb R}^{3}$.
The ${\bf v}_{i}$ are said to be the
{\it primitive  generating vectors} of $\sigma$
(``minimal'' generators, in \cite[p.30]{cox}).
They are uniquely determined by $\sigma$.
(See \cite[p.146]{ewa},  where the notation
${\rm pos}[{\bf v}_{1},  {\bf v}_{2},  {\bf v}_{3}]$
is used  instead of 
$\langle{\bf v}_{1},  {\bf v}_{2},  {\bf v}_{3}\rangle$.
In \cite[1.2.1]{cox} one finds the notation Cone($\{{\bf v}_{1}, 
 {\bf v}_{2},  {\bf v}_{3}\}$).)
 
 Throughout this paper we will  use the following notation:
\begin{equation}
\label{equation:notation}
\mathsf{A} = \{(x,y,z)\in {\mathbb R}^3\mid z = 1\}\,\,\,
{\rm and} \,\,\,
\mathsf{V} = \{(x,y,z)\in {\mathbb R}^3 \mid z > 0\}
\cup\{\mathbf 0\}.
\end{equation}
 
Let  $\rho\subseteq \mathsf V$ be a ray
and  ${\bf v}_{0}, {\bf v}_{1},\dots$   a sequence of 
primitive integer vectors in $\mathsf V$. 
 We then say that  
  ${\bf v}_{0}, {\bf v}_{1},\dots$ 
 {\it strongly converge} to  $\rho $
 if $\lim_{n\to \infty}\dist(\rho,  {\bf v}_{n})=0.$
%
Letting
 $v_n$  be the   orthogonal  projection
 of   $\langle  {\bf v}_{n} \rangle\cap \mathsf A$
  into  $z=0$, and  $r\in \mathbb R^2$  the  
  orthogonal projection of  $\rho\cap \mathsf A$ 
 into  $z=0$, we equivalently  say that the  sequence
 $v_0,v_1,\dots$  {\it strongly
 converges} to $r$, 
  \cite[(4), p.37]{gra}.


Following \cite[1.2.16]{cox} and \cite[V, 1.10]{ewa},
   a rational  cone  
$\sigma = \langle{\bf v}_{1},  {\bf v}_{2},  {\bf v}_{3}\rangle\subseteq
\mathbb R^3$
  is said to be
{\it regular}  $\,\,$ if
$\{{\bf v}_{1},  {\bf v}_{2},  {\bf v}_{3}\}$ 
is a basis of the free abelian group ${\mathbb Z}^3$.
Equivalently, the $3\times 3$ integer
matrix with row vectors ${\bf v}_{1},  {\bf v}_{2},  {\bf v}_{3}$
has determinant $\pm 1$.
%
%
Thus for     $p_i/d_i, q_i/d_i,
d_{i}>0,\, p_{i},q_{i},d_{i}\in \mathbb Z,\,\,
\gcd(p_i,q_i,d_i)=1,
\,\,i=1,2,3,$  the  triangle
$$
T =\conv(({p_1}/{d_1}, {q_1}/{d_1}),({p_2}/{d_2}, 
{q_2}/{d_2}),({p_3}/{d_3}, 
{q_3}/{d_3}))  
$$
is regular iff the primitive integer vectors  ${\bf v}_i=(p_i,q_i,d_i)$ 
generate a regular cone contained in $\mathsf V$.   
If $T$ is regular,  it is easy to see
(\cite[Corollary 11]{beagar}) that 
its area only depends on the least common
denominators  (henceforth, {\it denominators}, \cite[p.465]{lag})
  $d_i$ of the vertices of $T$, 
\begin{equation}
\label{equation:pqr}
\area(T)=\frac{1}{2d_{1}d_{2}d_{3}}.
\end{equation}

 
\noindent
A ray   $\rho = \langle (x,y,z)\rangle\in \mathbb R^3$ is said to be  
  {\it irrational} \,\,if\,\,  $x, y, z$ are linearly
independent over ${\mathbb Q}$. 
Thus the ray
$\langle(\alpha,\beta,1)\rangle$
 is irrational iff 
$(\alpha,\beta)$ is a rank 3 point.
Every  irrational ray  $\rho$ has no integer points
except the origin.
 The  converse is not true, e.g.,  
 for the ray $\langle(x,y,1)\rangle$ whenever
 $(x,y)$ is a  rank 2 point.
 
For  any   
$ 0 \leq \theta <  \pi/3$,  an irrational  ray $\rho \subseteq$  
$\mathsf{V}$    is said to be a
$\theta$-{\it accessible} if  there is 
a  sequence $\sigma_{0} \supsetneqq \sigma_{1}$
$\supsetneqq  \ldots $ of regular three-dimensional cones
 $\sigma_l\subseteq \mathsf{V}$  such that  
$\bigcap_l \sigma_l=\{\rho\}$, 
and for every $n=0,1,\dots,$  the angles of the
triangle $\sigma_{n} \cap \mathsf{A}$ are   
all $ > \theta $.  If $\rho$ is not 
$\theta$-accessible we say that 
$\rho$  is $\theta$-{\it inaccessible.}

\smallskip
As usual, ``conv'' denotes convex hull, 
and ``diam'' is short
for ``diameter''.

\medskip

\begin{lemma}
\label{lemma:ray}
  For every $\theta$ 
satisfying  $\,\, \arcsin (23^{1/2} / 6) <
\theta < \pi/3$  
there is an irrational $\theta$-inaccessible 
 ray $\,\,\rho \subseteq \mathsf{V}.$
\end{lemma}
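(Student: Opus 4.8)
The plan is to realise $\rho$ as the direction ray of a Davenport--Mahler simultaneous-approximation pair, and to deduce $\theta$-inaccessibility by pitting an elementary shape estimate for ``fat'' triangles against the Davenport--Mahler lower bound.

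First I would record a purely Euclidean fact: every triangle $T\subseteq\mathbb R^2$ all of whose angles are $\geq\theta$, with $\theta<\pi/3$, satisfies $\diameter(T)/\area(T)^{1/2}\leq 2\sqrt{\cot\theta}$. Writing the angles as $A\leq B\leq C$, the law of sines gives $\diameter(T)=2R\sin C$ and $\area(T)=2R^{2}\sin A\sin B\sin C$ for the circumradius $R$, whence $\diameter(T)^{2}/\area(T)=2\sin(A+B)/(\sin A\sin B)$; this quantity is decreasing in each of $A$ and $B$, so over $\{A,B,C\geq\theta,\ A+B+C=\pi\}$ it is maximal at $A=B=\theta$, $C=\pi-2\theta$, where it equals $2\sin(\pi-2\theta)/\sin^{2}\theta=4\cot\theta$. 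Since $\arcsin(23^{1/2}/6)<\theta<\pi/3$ and $\cot$ is decreasing on $(0,\pi/2)$, we get $\cot\theta<\cot\arcsin(23^{1/2}/6)=(13/23)^{1/2}$, hence
\[
\frac{\diameter(T)}{\area(T)^{1/2}}\ \leq\ 2\sqrt{\cot\theta}\ <\ 2\Bigl(\frac{13}{23}\Bigr)^{1/4}
\]
for every triangle $T$ all of whose angles are $>\theta$.

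Next I would invoke the theorem of Davenport--Mahler \cite{davmah} for the cubic field $K=\mathbb Q[X]/(X^{3}-X-1)$ of discriminant $-23$: taking $\alpha=\gamma$, $\beta=\gamma^{2}$ for the real root $\gamma$ of $X^{3}-X-1$, the point $x=(\alpha,\beta)$ is a rank $3$ point, and for \emph{every} sequence of regular triangles $R_{0},R_{1},\dots$ with $x\in\relint(R_{n})$ and $\bigcap_{n}R_{n}=\{x\}$ one has $\liminf_{n\to\infty}\diameter(R_{n})/\area(R_{n})^{1/2}\geq 2(13/23)^{1/4}$. The bridge to Davenport--Mahler is that, by \eqref{equation:pqr}, $\area(R_{n})^{1/2}=(2d_{n1}d_{n2}d_{n3})^{-1/2}$ with $d_{ni}$ the denominators of the vertices of $R_{n}$; since $x$ is irrational these tend to $\infty$; and the positivity of the homogeneous minimum of the norm form of $K$ on the lattice $\mathbb Z^{3}$ --- which is exactly what the Davenport--Mahler theorem provides, the value being governed by the discriminant $-23$ --- forces $\liminf_{n}d_{n1}d_{n2}d_{n3}\,\diameter(R_{n})^{2}\geq 2(13/23)^{1/2}$, equivalently the stated bound.

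Finally, since $1,\alpha,\beta$ are $\mathbb Q$-linearly independent, $\rho=\langle(\alpha,\beta,1)\rangle\subseteq\mathsf V$ is an irrational ray. If $\rho$ were $\theta$-accessible, take the corresponding regular cones $\sigma_{0}\supsetneqq\sigma_{1}\supsetneqq\cdots$ in $\mathsf V$ with $\bigcap_{l}\sigma_{l}=\{\rho\}$ and all angles of each $T_{l}:=\sigma_{l}\cap\mathsf A$ exceeding $\theta$. Then each $T_{l}$ is a regular triangle (its primitive generators have positive third coordinate and form a $\mathbb Z$-basis), one has $x\in\relint(T_{l})$ since a rank $3$ point lies in the interior of every regular triangle containing it, and $\bigcap_{l}T_{l}=\{x\}$; being a nested sequence of compacta with singleton intersection, $\diameter(T_{l})\to 0$, so $(T_{l})_{l}$ is admissible for the previous paragraph and $\liminf_{l}\diameter(T_{l})/\area(T_{l})^{1/2}\geq 2(13/23)^{1/4}$ --- contradicting the first paragraph. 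Hence $\rho$ is $\theta$-inaccessible. The routine parts are the first and last paragraphs; the crux is the middle one, namely squeezing out of the classical Davenport--Mahler theorem the \emph{uniform asymptotic} lower bound $\liminf_{n}\diameter(R_{n})/\area(R_{n})^{1/2}\geq 2(13/23)^{1/4}$ for \emph{all} regular triangles shrinking to $x$, with the exact constant and with the possibly very unequal denominators $d_{n1},d_{n2},d_{n3}$ controlled against the longest side of $R_{n}$ (using unimodularity and $x\in\relint(R_{n})$).
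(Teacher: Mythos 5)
Your overall architecture is sound in outline (produce a Davenport--Mahler pair, then play a shape estimate for fat triangles against an arithmetic lower bound), and your first paragraph is a correct computation: $\diameter(T)^2/\area(T)=2\sin(A+B)/(\sin A\sin B)\le 4\cot\theta$ when all angles exceed $\theta$, and $\cot\arcsin(23^{1/2}/6)=(13/23)^{1/2}$, so every all-angles-$>\theta$ triangle has $\diameter/\area^{1/2}<2(13/23)^{1/4}$. (This is essentially the elementary fact the paper uses elsewhere, to pass from the Lemma to the displayed inequality of Theorem \ref{theorem:uno} and to Corollary \ref{corollary:tre}.) The final paragraph is also fine. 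But the middle paragraph --- which you yourself flag as the crux --- is asserted, not proved, and it is precisely where all the work of the paper's proof lives. The Davenport--Mahler theorem gives a statement about the linear form $a\alpha+b\beta+c$: only finitely many integer triples $(a,b,c)$ with $a^2+b^2>0$ satisfy $|a\alpha+b\beta+c|<c_\theta/(a^2+b^2)$, for a constant $c_\theta<2/\surd{23}$. To convert this into your claimed bound $\liminf_n d_{n1}d_{n2}d_{n3}\,\diameter(R_n)^2\ge 2(13/23)^{1/2}$ for \emph{every} sequence of regular triangles shrinking to $x$, one must say which triples $(a,b,c)$ a given regular triangle supplies and how the quantity $(a^2+b^2)\,|a\alpha+b\beta+c|$ controls the triangle's shape. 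In the paper this is done by taking the rows $(a_i,b_i,c_i)$ of the transpose inverse of the unimodular vertex matrix (i.e.\ the generators of the dual cone): unimodularity gives the exact identities $\dist(u_j,u_k)=\delta_i/(r_jr_k)$, $\dist(u_i,\linea(u_j,u_k))=1/(r_i\delta_i)$ and $\sin\theta_i=r_i/(\delta_j\delta_k)$ with $\delta_i=(a_i^2+b_i^2)^{1/2}$, and an averaging step ($\sum_i\delta_j\delta_k/\delta_i^2\ge 3$) turns ``all three forms are badly approximable'' into a bound on $\sin\theta$, i.e.\ on the shape; one must also check that infinitely many \emph{distinct} triples arise as the cones shrink, so the finitely many Davenport--Mahler exceptions are harmless. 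None of this is in your proposal; invoking ``the positivity of the homogeneous minimum of the norm form of $K$'' does not by itself yield the stated uniform asymptotic inequality, nor the exact constant.

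There is also a secondary point worth noting: you fix a single pair $(\gamma,\gamma^2)$ and claim the bound with the sharp constant $2/\surd{23}$ for it. What the paper actually uses is weaker and safer: for each $\theta$ in the stated range one has $c_\theta=1/(3\sin\theta)<2/\surd{23}$, and Davenport--Mahler's proof of Theorem 2(b) supplies \emph{some} pair $(\alpha,\beta)$ (possibly depending on $c_\theta$) with only finitely many solutions below the threshold $c_\theta$. Your version needs the single pair to be extremal for all thresholds below $2/\surd{23}$ simultaneously, which is a stronger claim than the Lemma requires and than you justify. Restructuring your argument so that, for the given $\theta$, you pick $c_\theta=1/(3\sin\theta)$ and argue by contradiction directly from the angle hypothesis --- rather than first proving the quantitative $\liminf$ inequality --- would both close the gap and match what Davenport--Mahler actually delivers.
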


\begin{proof}
Fix an arbitrary $\theta $ satisfying  
$\,\,\arcsin (23^{1/2} /6)  <  \theta < \pi/3$, and
let
\begin{equation}
\label{equation:citheta}
c_{\theta} = \frac{1}{3 \sin \theta}.
\end{equation}  
Thus  $ \frac{2}{\surd 27} <  c_{\theta} <  \frac{2}{\surd 23}$.
For any such $ c_\theta$, 
Davenport and Mahler   
\cite[Proof of Theorem 
2(b)]{davmah}  exhibit  a pair $(\alpha,\beta)$
of  real
numbers  
satisfying the following condition:

\medskip
\begin{quote}
	$(\ddagger)\,\,$   
	$\alpha,\beta,1\,$ are linearly
independent over $ {\mathbb Q} $ and  there are only
finitely many triplets   $(a, b, c) \in {\mathbb Z}^3$  such
that $0 < a^{2} + b^{2}$   and    
$ \mid a\alpha + b\beta + c \mid $  	    
$ < c_{\theta} / (a^{2}+ b^{2}) $.
\end{quote}
%

\medskip 
\noindent 
Let 
$
\rho= \rho(\theta)= \langle (\alpha, \beta,1) \rangle.
$
By  $(\ddagger)$,
  $\rho\subseteq \mathsf V$ is an irrational ray. 
Arguing by way of contradiction
we will  show that  $\rho$ 
is   $\theta$-inaccessible.


\smallskip
Indeed,  suppose   there exists a   sequence    
$ \sigma_{0}$  
$\supsetneqq\sigma_{1}\supsetneqq \dots$ 
of  regular cones  in $\mathsf{V}$   
such that  $ \bigcap_l \sigma_{l}=\{\rho\}$ and the angles  
of each triangle $\sigma_{l}\cap \mathsf{A}$        
are all $ >\theta $ (absurdum
hypothesis).  
For fixed but otherwise arbitrary  $m = 0, 1,\dots$
let us write for simplicity
$$\sigma = \sigma_{m} =  \langle {\bf u}_{1}, {\bf u}_{2},
{\bf u}_{3} \rangle,$$
with 
$
{\bf u}_{i} = (p_{i}, q_{i}, r_{i}) \mbox{ for suitable integers }
p_{i},q_{i},r_{i} \mbox{ with }  r_{i} > 0,\,\,\, (i=1,2,3).
$         
Let the matrix $M$ be defined by 
\[
M = \left(
\begin{array}{ccc}
p_{1} & q_{1} & r_{1}\\
p_{2} & q_{2} & r_{2}\\
p_{3} & q_{3} & r_{3}
\end{array}
\right).
\]
We  can safely assume  $\det M = 1 $. 
The cone $\sigma$ determines the triangle
$S =  \sigma \cap \mathsf{A}$
with vertices  $(p_i /r_i, q_i /r_i,1),$
\,\,\,$i=1,2,3.$  
Let $S_\downarrow$
be  the orthogonal  projection of $S $ 
to the plane   $z=0$, 
$$
S_\downarrow=\conv(u_1,u_2,u_3)=\conv((p_1 /r_1, q_1/r_1),
(p_2/r_2, q_2/r_2),(p_3 /r_3, q_3 /r_3)).
$$   
The assumed  regularity of
$\sigma$ means that $S_\downarrow$ is
regular.  By \eqref{equation:pqr},  the area of $S$ 
equals 
 $({2r_{1}r_{2}r_{3}})^{-1}.$
 Let  us display the 
  transpose inverse
of  $ M$ by writing 
\begin{equation}
\label{equation:matrix}
L = \left(
\begin{array}{ccc}
a_{1} & b_{1} & c_{1}\\
a_{2} & b_{2} & c_{2}\\
a_{3} & b_{3} & c_{3}
\end{array}
\right).
\end{equation}
By definition, the rows of $ L $ are the
primitive generating vectors of the {\it dual cone} of     
$\sigma$,  \cite[1.2.3]{cox}, \cite[I, 4.1]{ewa}.  
For each $i = 1, 2, 3 $, let  the 
function $ f_i \colon {\mathbb R}^{2}
 \rightarrow  {\mathbb R}$  be defined by     
$f_i(x, y)  = a_i x  + b_i y  +  c_i. $  Then  
$f_i(u_i) = 1/r_i$  and  $f_i$ constantly  
vanishes over the segment $\conv(u_j,u_k)$, 
  where  $(i \not = j,\,\, j\not = k,\,\, k\not = i)$.
Writing for short
$$
\delta_{i} = (a_{i}^2  +  b^2_{i})^{1/2},
$$
it follows that 
\begin{equation}
	\label{equation:2}
	\dist(u_j,u_k) = \frac{\delta_i}{r_{j}  r_k }
	\,\,\,\,\,{\rm and}\,\,\,\,\,
	\dist(u_i,\linea(u_j,u_k)) = \frac{1}{r_i \delta_i}.
	\end{equation}
By our standing absurdum hypothesis,   for each   
$i = 1,2,3 $  the  angle $\theta_i$ of the triangle 
$S_\downarrow$    satisfies the inequalities   
$\theta <\theta_i <  \pi/2.$   Therefore,  
\begin{equation}
	\label{equation:3}
\sin \theta <\sin \theta_i = \frac{r_{i}}{\delta_{j} \delta_k}.
\end{equation}

\medskip
\noindent{\it Claim:}   In the notation
of $(\ddagger)$,
 \eqref{equation:citheta} and \eqref{equation:matrix},
for   at least one
$i \in  \{1, 2, 3\}$ we have  
$\mid  a_{i}\alpha + b_{i}\beta + c_{i}  \mid   
  <   c_{\theta} /( a_{i}^2 + b^2_{i}).$ 

\bigskip
For otherwise, 
 for all $i$  we  have   
$$\,\,\mid a_i \alpha + b_i \beta + c_i \mid\,\,  
=\,\, \delta_i \cdot {\rm dist}((\alpha, \beta), \linea(u_j,u_k)  ) 
\geq  c_\theta / \delta^2_i,$$   
whence by (\ref{equation:2}),  
$$\,\,\,\dist(u_j,u_k)
\cdot {\rm dist}((\alpha, \beta), \linea(u_j,u_k) ) \geq
\frac{c_{\theta}}{\delta^2_i r_j r_k}.$$  
{}From  (\ref{equation:3})  we obtain  
$$\,\,\,2\cdot \area(T_\downarrow)\geq c_{\theta}\cdot 
\sum_{i}\,\frac{1}{\delta_{i}^2 r_{j} r_k}\,.$$
As a consequence,

\medskip
$$\frac{1}{c_\theta} \geq r_1 r_2 r_3\cdot  \sum_i 
\frac{1}{\delta^2_i  r_j  r_k}
= \sum_i  \frac{r_i}{ \delta^2_i} =
\sum_i  \frac{\delta_j  \delta_k  \sin \theta_i}
{\delta_{i}^2}  > \sin \theta\cdot 
\sum_{i}
\frac{\delta_{j} \delta_{k}}{\delta_{i}^2} 
\geq  3 \sin \theta,$$

\medskip
\noindent
which is impossible.
This settles our  claim. 

\bigskip
Let us now turn back to the sequence
$ \sigma_{0}$  
$\supsetneqq\sigma_{1}\supsetneqq \dots$ 
of  regular cones 
introduced at the outset of this proof.
For each  $n=0,1,\dots$ let us now write
$\sigma_n$   =  
$\langle {\bf v}_{1n}, {\bf v}_{2n}, {\bf v}_{3n}\rangle$,  
where,  for each    $i  = 1, 2,3$,
${\bf v}_{i n} = (p_{i n} , q_{i n},r_{i n})$   
$\in  {\mathbb Z}^3 $ and $(p_{i n} /r_{i n},\,\,q_{i n}/r_{i n},\,1)$   
is the  $i$th  vertex of the triangle 
$S_n = \sigma_n \cap \mathsf{A}$. Let   
\[
M_n = \left(
\begin{array}{ccc}
p_{1n} & q_{1n} & r_{1n}\\
p_{2n} & q_{2n} & r_{2n}\\
p_{3n} & q_{3n} & r_{3n}
\end{array}
\right),
\] 

\medskip
\noindent  
and $ L_n$   be the transpose inverse of  $ M_n$.   
Since   $\rho$ is an irrational ray, the
vector $(\alpha,\beta,1)$ lies in the (relative) interior of each
triangle  $S_n$.  Thus from   
$ \bigcap_l \sigma_l=\{\rho\}$
it follows that   for every    $n$    there 
exists    $m$     such that for all  $l   >   m $    no 
row  of  $L_n$ is a row of $L_l$ .  Our claim then
yields  infinitely many   triplets $(a, b, c)$ of integers          
 such that  $ 0  < a^2  +  b^2 $ and  
$\mid a\alpha + b\beta  + c \mid \,\,<\, c_{\theta}/(a^2 + b^2)$.
This contradicts the Davenport-Mahler result  $(\ddagger)$.
\end{proof}

%
%
%

The proof of
Theorem \ref{theorem:uno} immediately follows
from Lemma \ref{lemma:ray} upon setting $x=(\alpha,\beta)$.
The first inequality of
\eqref{equation:davenport}
is a consequence of \eqref{equation:pqr}.

\section{Proof of Theorem \ref{theorem:due} and Corollary
\ref{corollary:tre}}
\label{section:star}

Further
details will be   needed about the set 
of $\theta$-accessible rays in $\mathbb R^3$.
First of all, recalling
 the notational stipulations \eqref{equation:notation},
let us equip the set 
$$
\mathcal R =\{\rho\subseteq \mathsf V\mid \rho  \mbox{ is a ray} \}
$$
 with the topology inherited from the
real projective plane.  Thus a subset  
of $\mathcal R$ is  {\it open}  if it  coincides with
 the set of all rays
intersecting  $U$, for some relatively open set 
$U \subseteq \mathsf A\subseteq \mathbb R^3$.

Next we prepare
the two-dimensional
counterpart of the classical operation
of taking Farey mediants of  segments in
$[0,1]$ with rational vertices. This is frequently
found in  diophantine approximation
(sometimes called ``Farey sum'')
\cite[3.1]{beagar}, 
\cite[2.1]{gra}, 
\cite[p.441]{nog2},
and  is also a main tool for the resolution of
singularities of fans  in 
the theory of toric varieties,  \cite[\S 11.1]{cox}, \cite[III, 2.1]{ewa}.
It will find pervasive use in this section.
%
 
 As in  \eqref{equation:monocone} and
  \eqref{equation:cone},
given a cone
$\psi = \langle {\bf u}_{1},{\bf u}_{2},{\bf u}_{3}\rangle \in {\mathbb R}^{3}$
and $i\not=j\in\{1,2,3\}$  we use the notation
 $$
 \langle {\bf u}_i,{\bf u}_j\rangle = \{\alpha  {\bf u}_i+
\beta  {\bf u}_j \in \mathbb R^3\mid 0\leq  \alpha,\beta\in \mathbb R \}.
 $$
The two-dimensional  cones   
$\langle {\bf u}_{1},{\bf u}_{2}\rangle,
\langle {\bf u}_{3},{\bf u}_{1}\rangle$,  
and $\langle {\bf u}_{2},{\bf u}_{3}\rangle$ are   the 
two-dimensional {\it faces} of $\psi$.
The vector ${\bf u}_{1}+ {\bf u}_{2} \in {\mathbb Z}^3$
is called the (Farey) {\it mediant} 
of ${\bf u}_{1}$ and ${\bf u}_{2}$. 
In case  $\psi$ is regular,
    ${\bf u}_{1}+ {\bf u}_{2}$ is primitive;    the  three-dimensional simplicial cones  
$\langle {\bf u}_{1}+ {\bf u}_{2} ,{\bf u}_{2},{\bf u}_{3}\rangle$
and $\langle {\bf u}_{1}+ {\bf u}_{2} ,{\bf u}_{1},{\bf u}_{3}\rangle$      
are said to be obtained by the {\it binary starring}  of  
$\psi$   at  ${\bf u}_{1}+ {\bf u}_{2}$.
We write, respectively,   
\begin{equation}
\label{equation:binary}
 \langle {\bf u}_{1},{\bf u}_{2},{\bf u}_{3}\rangle
 \mapsto^*
 \langle {\bf u}_{1}+ {\bf u}_{2} ,{\bf u}_{2},{\bf u}_{3}\rangle
 \mbox{ and }
  \langle {\bf u}_{1},{\bf u}_{2},{\bf u}_{3}\rangle
 \mapsto^*
 \langle {\bf u}_{1}+ {\bf u}_{2} ,{\bf u}_{1},{\bf u}_{3}\rangle. 
\end{equation}
Both cones 
$\langle {\bf u}_{1}+ {\bf u}_{2} ,{\bf u}_{2},{\bf u}_{3}\rangle$
and $\langle {\bf u}_{1}+ {\bf u}_{2} ,{\bf u}_{3},{\bf u}_{1}\rangle$ are regular.

As usual,
a  {\it simplicial complex} in $\mathbb R^n$ is a finite
set $\mathcal K$ of simplexes in  $\mathbb R^n$ , closed under taking faces,
and having the further property that any two elements of 
$\mathcal K$ intersect in a common face,
 \cite[I, p.66]{ewa}.
For every simplicial complex  $\Sigma$, the
point set union of the simplexes of $\Sigma$ is
called the {\it support} of $\Sigma$, denoted
$|\Sigma|$, \cite[I, 1.9]{ewa}. 
As a special case of a general
definition, when $|\Sigma|\subseteq \mathbb R^2$
coincides with the point set 
 union of the triangles in  $\Sigma$,
we say that the simplicial complex 
 $\Sigma$ is {\it regular}
 (``unimodular'' in \cite{mun-dcds}) if so are all its triangles.
 We also say that $\Sigma$ is a
 regular {\it triangulation} of 
 its support.  Regular triangulations
are  affine  counterparts 
of regular fans, \cite[3.1.18]{cox},  \cite[V, 4.11]{ewa}.
 If $\Sigma_1$ and $\Sigma_2$ have the same
{support}   
and every simplex of $\Sigma_1$ is contained
in some simplex of $\Sigma_2$, we say that
$\Sigma_1$ is a {\it subdivision} of $\Sigma_2.$

\smallskip
For any triangle
$\conv(P,Q,R)$,  let
 $P\widehat Q R$ denote the angle with vertex $Q$.
By a traditional abuse of notation,
we will use the same notation for angles
 and their measure.

For the proof of the next lemma we let
$$
{\bf n}_{0} =  (p_{0}, q_0, r_{0}), \,\,\,
 {\bf n}_{1} = (p_{1}, q_{1}, r_{1}), \dots
$$
enumerate  (in some prescribed lexicographic order)
 the totality of
 primitive vectors  
 ${\bf n} = (p,q,r) \in {\mathbb Z}^3$   
satisfying the inequality 
 $\mid p \mid + \mid q \mid \,\,\, > 0.$
 For each $i=0,1,\dots$ let
 the plane  ${\bf n}_{i}^\perp\subseteq \mathbb R^3$
 be defined by 
 $ p_i x + q_i y +r_{i} z = 0$.
 It follows that    ${\bf n}_{i}\cap \mathsf A$ is a line, denoted 
 $\Lambda_{i}$.
 Moreover, the sequence 
 \begin{equation}
 \label{equation:lines} 
 \Lambda_0,\,\,\,\Lambda_1,\dots
 \end{equation}
gives  all possible {\it rational} lines
 lying on $\mathsf A,$   (i.e., all  lines
 containing at least two distinct rational points
 of $\mathsf A$).

\begin{lemma}
\label{lemma:second-bis}
For any  
	$ \theta  < \pi/3$
	the set of irrational $\theta$-accessible rays
	 is dense in $\mathcal R$.
	\end{lemma}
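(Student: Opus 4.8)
The plan is to show that the set in question meets every nonempty basic open subset of $\mathcal R$; that is, for every nonempty relatively open $U\subseteq\mathsf A$ I want to produce an irrational $\theta$-accessible ray whose point of $\mathsf A$ lies in $U$. Fix once and for all a $\theta'$ with $\theta<\theta'<\pi/3$. I would build a strictly decreasing sequence $\sigma_0\supsetneq\sigma_1\supsetneq\cdots$ of regular three-dimensional cones in $\mathsf V$ with: (i) $\sigma_0\cap\mathsf A\subseteq U$; (ii) all angles of every triangle $\sigma_n\cap\mathsf A$ greater than $\theta'$; (iii) $\diameter(\sigma_n\cap\mathsf A)\to 0$; (iv) $\sigma_{n+1}$ reached from $\sigma_n$ by finitely many binary starrings. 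Since each $\sigma_n$ is the cone over $(\sigma_n\cap\mathsf A)\times\{1\}$, property (iii) forces the nested triangles $\sigma_n\cap\mathsf A$ to shrink to a single point $(\alpha,\beta)\in U$ and gives $\bigcap_n\sigma_n=\langle(\alpha,\beta,1)\rangle=:\rho$. The sequence $(\sigma_n)$ then witnesses, via (ii), that $\rho$ is $\theta$-accessible; it only remains to make $\rho$ irrational, which I arrange during the construction by steering clear of the rational lines $\Lambda_0,\Lambda_1,\dots$ of $\mathsf A$ enumerated in \eqref{equation:lines}.

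The one technical input I would isolate — and the place where the algorithmic resolution of toric singularities via stellar operations, \cite{agumun}, does the real work — is the following refinement statement: for every regular cone $\sigma\subseteq\mathsf V$, every $\varepsilon>0$ and every point $q\in\relint(\sigma\cap\mathsf A)$, some finite sequence of binary starrings carries $\sigma$ to a regular subdivision one of whose cones $\tau$ satisfies $q\in\relint(\tau\cap\mathsf A)$, $\diameter(\tau\cap\mathsf A)<\varepsilon$, and all angles of $\tau\cap\mathsf A$ greater than $\theta'$. A first consequence is that near-equilateral regular cones exist with triangle inside any prescribed nonempty open set: the cells of the standard unimodular triangulation of the plane cover $\mathsf A$, so $U$ meets the interior of one such cell $T'$, and applying the refinement statement to the regular cone over $T'$ with a point $q\in U\cap\relint(T')$ and small $\varepsilon$ produces the desired $\sigma_0$. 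The delicate point — and the main obstacle — is exactly that one cannot refine by edge-bisection or barycentric subdivision: those either destroy regularity (the unimodularity condition \eqref{equation:unimodular}) or immediately create needle-like cells violating the angle bound. One must instead use controlled sequences of Farey-mediant starrings, the same ones underlying the estimate $\diameter/\area^{1/2}<2(1/3)^{1/4}+\varepsilon$ of Theorem \ref{theorem:due}; and the substantive content is that the bound $>\theta'$ can be met by the cells $\tau$ one finally keeps, for arbitrarily small $\varepsilon$ and arbitrarily placed $q$ (intermediate cones produced by the starrings need not satisfy any angle bound, since they do not enter the sequence $(\sigma_n)$).

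Granting this, I would finish as follows. With $\sigma_0$ as above, suppose $\sigma_0,\dots,\sigma_n$ are built. The line $\Lambda_n$ meets $\sigma_n\cap\mathsf A$ in at most a segment, so I may choose $q_n\in\relint(\sigma_n\cap\mathsf A)\setminus\Lambda_n$ and $\varepsilon_n<2^{-n}$ with the ball of radius $\varepsilon_n$ about $q_n$ disjoint from $\Lambda_n$; applying the refinement statement to $\sigma_n,q_n,\varepsilon_n$ yields $\sigma_{n+1}\subsetneq\sigma_n$, reached from $\sigma_n$ by binary starrings, with all angles of $\sigma_{n+1}\cap\mathsf A$ greater than $\theta'$, $\diameter(\sigma_{n+1}\cap\mathsf A)<2^{-n}$, and $(\sigma_{n+1}\cap\mathsf A)\cap\Lambda_n=\emptyset$, giving (i)--(iv). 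The point $(\alpha,\beta)=\bigcap_n(\sigma_n\cap\mathsf A)$ then lies off $\Lambda_n$ for every $n$, since $(\alpha,\beta)\in\sigma_{n+1}\cap\mathsf A$; hence it lies on no rational line at all, so $\alpha,\beta,1$ are linearly independent over $\mathbb Q$ and $\rho=\langle(\alpha,\beta,1)\rangle$ is irrational. By (i) and (ii), $\rho$ is an irrational $\theta$-accessible ray meeting $U$, and as $U$ was an arbitrary nonempty relatively open subset of $\mathsf A$, the set of irrational $\theta$-accessible rays is dense in $\mathcal R$.
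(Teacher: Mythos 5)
Your overall frame is the right one and matches the paper's: meet every basic open set, build a nested sequence of regular cones with all triangle angles $>\theta'$ and diameters tending to $0$, and secure irrationality by arranging that the $n$th cone's triangle misses the $n$th rational line $\Lambda_n$. The problem is your isolated ``refinement statement,'' which you leave unproved and which is in fact \emph{false} in exactly the range of $\theta'$ the lemma needs. If for every regular cone $\sigma$, every $\varepsilon>0$ and \emph{every} $q\in\relint(\sigma\cap\mathsf A)$ one could find, by binary starrings, a regular cell $\tau$ with $q\in\relint(\tau\cap\mathsf A)$, all angles $>\theta'$ and diameter $<\varepsilon$, then iterating this with $\varepsilon\to 0$ around a fixed rank~3 point $q$ would produce a nested sequence of regular cones shrinking to the ray through $q$ with all angles $>\theta'$; i.e.\ \emph{every} rank~3 point would be $\theta'$-accessible. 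For $\theta'>\arcsin(23^{1/2}/6)\approx 53^\circ$ this contradicts Lemma \ref{lemma:ray} (equivalently Theorem \ref{theorem:uno}), which exhibits $\theta'$-inaccessible irrational rays. Since the lemma must be proved for all $\theta<\pi/3=60^\circ$ and you take $\theta'\in(\theta,\pi/3)$, your key input fails precisely where the result is interesting (and where Corollary \ref{corollary:tre} uses it). Weakening the quantifier to ``for some $q$ in any prescribed open subset'' would repair the contradiction, but then the statement \emph{is} the lemma, and you have not proved it; the entire content of the result sits in that box. You also misplace where the work happens: resolution of singularities \cite{agumun} is only used to seed the construction with one regular cone inside the given open triangle; it does not control angles.

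The essential idea you are missing is that the limit point must be an \emph{output} of the construction, not an input. The paper never refines toward a prescribed point. Instead, from a regular cone $\tau_0$ it (Step 1) runs a slow-continued-fraction sequence of binary starrings on the edge $\langle\mathbf a_0,\mathbf b_0\rangle$, steered by an auxiliary irrational point $I$ chosen on $\conv(A_0,B_0)$ so that after finitely many steps one angle $\zeta$ satisfies $\theta<\zeta<\pi/3$; then (Step 2) applies Brentjes' two-parameter family $\sigma_{p,q}=\langle\mathbf a_m+p\mathbf b_m,\ \mathbf c_m+q\mathbf b_m,\ \mathbf b_m\rangle$, in which the angle at $B_m$ stays equal to $\zeta$ while the other two angles tend to $(\pi-\zeta)/2>\pi/3$, and chooses $p',q'$ large enough that the resulting triangle also avoids the current rational line and sits in the relative interior of the previous one. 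Iterating this pair of steps gives the nested cones; the angles are always of the form ``one in $(\theta,\pi/3)$, two $>\pi/3$,'' the areas (hence, by the angle bound, the diameters) tend to $0$, and the limit ray is irrational because its trace avoids every $\Lambda_k$. Your proposal needs this (or some equally explicit) mechanism in place of the asserted refinement statement.
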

	
	\begin{proof}
It suffices to prove the lemma
under
the more restrictive condition
\begin{equation}
\label{equation:pifourth}
  \pi/4 <\theta <\pi/3.
\end{equation}	
	The set  $\mho$ of all relatively open right
	triangles with rational vertices 
	lying on the plane  $\mathsf A$
	is a basis of the natural  
	topology of $\mathsf A$ inherited by
	restriction from the usual  topology of
	$\mathbb R^3$.
	So
	for any  nonempty
$\mathcal O\in \mho$  
  we must  show that   
 some $\theta$-accessible ray 
 has a 
nonempty intersection with
$\mathcal O$.  

 Let $\overline{\mathcal O}$ denote the closure of $\mathcal O.$
There exists a uniquely determined
 (necessarily rational, 
simplicial, three-dimensional) cone 
$\tau \subseteq \mathsf{V}$ such that the     
triangle $\tau \cap \mathsf{A}$ coincides with
 $\overline{\mathcal O}$.

 Toric resolution of singularities, \cite[\S 11.1, p.113]{cox},
 \cite[VI, proof of 8.5, p.165]{ewa}, 
 (whose fan-theoretic reformulation for $\tau$
  amounts to starring at  
  primitive integer vectors arising from
   iterated applications of
Blichtfeldt's theorem in
the Geometry of Numbers, \cite[9, p.35]{lek},
 \cite[1.2]{mun-adv}, \cite[p.544]{mun-dcds})
%
%
%
%
%
yields a {\it regular fan over  $\tau$},
i.e., a complex $\Delta$ of
regular cones and their faces,   such that  $\tau$ coincides  with
the point set union of the cones of $\Delta.$
Interestingly enough,   from the input data
consisting of the vertices of
 $\overline{\mathcal O}$,  a regular
fan  $\Delta$ can be effectively  computed 
having special minimality properties, \cite{agumun},
which are characteristic of desingularizations of fans
in $\mathbb R^3$,
and are reminiscent of the Hirzebruch-Jung
continued fraction algorithm for  the smallest
resolution of singularities of fans in $\mathbb R^2$,
 \cite{cox}.

\smallskip
Let  $\tau =  \langle {\bf a},{\bf b},{\bf c} \rangle\in \Delta$
be a  three-dimensional  cone with  $\tau\cap \mathsf A=\conv(A,B,C)$.
Then without loss of generality we can write
 $$
\pi/3< A\widehat{C}B,\,\,\,\,\,\,\,\,\,\,\, 
A\widehat{C}B\geq C\widehat{A}B
 \geq A\widehat{B}C,\,\,\,\,\,\,\,\,\,
 A\widehat{B}C< \pi/3. 
$$   
  

\medskip
\noindent
{\it Preamble.}
In case
$\conv(A,B)\subseteq \Lambda_0$, via one binary 
 starring 
we replace  $\tau$ by
$\tau_0= \langle {\bf a},{\bf b}+{\bf c},{\bf c}\rangle$,  then
give new names ${\bf a}_0,{\bf b}_0,{\bf c}_0$
to the primitive generating vectors of $\tau_0$
and write  $A_0,B_0,C_0$
for  the corresponding vertices of the triangle
$\tau_0\cap\mathsf A$ in such
a way that

\begin{equation}
\label{equation:c}
\pi/3< A_0\widehat{C}_0B_0,\,\,\, 
A_0\widehat{C}_0B_0\geq C_0\widehat{A}_0B_0
 \geq A_0\widehat{B}_0C_0,\,\,\,
 A_0\widehat{B}_0C_0< \pi/3. 
\end{equation}%

\medskip
\noindent
In case $\conv(A,B)\nsubseteq \Lambda_0$,  we just set 
$\tau_0= \langle {\bf a},{\bf b},{\bf c}
  \rangle= \langle {\bf a}_0,{\bf b}_0,{\bf c}_0
  \rangle$  and let 
  $$
  A_0=\langle {\bf a}_0 \rangle\cap\mathsf A,\,\,\,\,\,
    B_0=\langle {\bf b}_0 \rangle\cap\mathsf A,\,\,\,\,\,
      C_0=\langle {\bf c}_0 \rangle\cap\mathsf A.
  $$
In any case, the regular cone $\tau_0$ satisfies
\eqref{equation:c}  as well as
\begin{equation}
\label{equation:sfuggita}
\conv(A_0,B_0)\nsubseteq \Lambda_0.
\end{equation}

Next we proceed with the following two steps:

\medskip
\noindent {\it Step 1.}
Let   the point $J $ be defined by
$
J\in \conv(A_0,B_0),\,\,\, A_0\widehat{J}C_0 =\pi/3.
$
The existence   of $J$ is ensured by
 \eqref{equation:c}.  By 
\eqref{equation:pifourth}
 and
  \eqref{equation:sfuggita},
 there exists a point   $I$ satisfying  the following
 conditions:
 \begin{equation}
 \label{equation:i}
 I\in\conv(A_0,B_0),\,\,\, I\notin \mathbb Q^2, \,\,\,I\notin\Lambda_0, \,\,\,
 \frac{\pi-3\theta}{6} < A_0\widehat{I}C_0 < \frac{\pi}{3}.
 \end{equation}
 Since $I$ is not a rational point,
 the  ray $\rho$ through $I$ is contained in exactly one
  of the two
  three-dimensional cones obtained by binary starring 
  $\tau_{0}$ at ${\bf a}+{\bf b}$.
Letting  $\tau_1$ denote such  cone, we may write
in more detail
$$ \tau_1
= \langle {\bf a}_{1},{\bf b}_{1},{\bf c}_0\rangle \in 
\{\langle {\bf a}_0, {\bf a}_0+{\bf b_0}, {\bf c}_0\rangle,
\langle {\bf b}_0, {\bf a}_0+{\bf b}_0,{\bf c}_0\rangle \}.$$
Keeping   ${\bf c}_0$ fixed and proceeding as in the
classical slow continued fraction algorithm,  we have a    
sequence of  regular three-dimensional cones
containing $\rho\ni I,$ 
$$
\tau_{0} \supsetneqq  \tau_{1}  \supsetneqq \dots,
$$
where 
$\tau_{n} = \langle {\bf a}_{n},{\bf b}_{n},{\bf c}_0 \rangle\ni\rho$,
and  $\tau_{n+1}$ is the result of a binary starring
of $\tau_n$.  The sequence does not  terminate, because 
$I\notin \mathbb Q^2.$ Let us write 
$$\tau_n\cap\mathsf A=
\conv(A_n,B_n,C_0),
\mbox{ where } A_n=  \langle {\bf a}_n \rangle\cap\mathsf A
\mbox{ and } 
B_n=  \langle {\bf b}_n \rangle\cap\mathsf A.$$
 Since   $B_n\widehat{C}_0A_n$ shrinks to zero
as $n$ tends to $\infty$, then 
 $\bigcap_l \langle {\bf a}_{l},{\bf b}_{l}\rangle=\{\rho\}$.
 Further, 
$
\lim_n A_n\widehat{B_n}C_0=A_0\widehat{I}C_0.
$
Since by
\eqref{equation:i},
   $I\notin \Lambda_0$, then 
   for some $\zeta\in \mathbb R$   and  $0<m\in\mathbb Z$
the triangle   $\conv(A_m,B_m,C_0)$
satisfies
\begin{equation}
\label{equation:partialcond(i)}
B_m \notin  \Lambda_0,\,\,\,\,\,\,\,\,\,
\theta < \zeta = A_{m}\widehat{B_{m}} C_0<\pi/3,\,\,\,\,\,\,\,\,
\{A_0,B_0\}\cap\{A_m,B_m\}=\emptyset.
\end{equation}  
Step 1 is completed.

\bigskip
\noindent {\it Step 2.}  
Next we keep fixed  the primitive generator ${\bf b}_m$ of
 $\tau_m= \langle {\bf a}_{m},{\bf b}_{m},{\bf c}_0 \rangle$,
 and proceed as in Brentjes' \cite[2.3(2), p.19]{bre}.
For all   $0 \leq  p,q \in {\mathbb Z} $,  the cone
$\sigma_{p,q} = \langle {\bf a}_m + p{\bf b}_{m},\,\,\,
{\bf c}_{m} + q{\bf b}_{m},\,\,\, {\bf b}_{m}\rangle$
 is regular and is   contained in $ \tau_{m}$. 
 Further, $\sigma_{p,q}$ is obtainable from
 $ \tau_{m}$ by a   sequence of $p+q$ binary starrings.
 (See \cite[Fig 3, p.58]{gra} for an illustration of these 
 binary starrings, with the caveat that  
 all vertices therein have different names from those
 of our triangles here.)
Let  
$$
A_{p,q}=\langle {\bf a}_m + p{\bf b}_{m}\rangle\cap \mathsf A,\,\,\,\,
C_{p,q}=\langle{\bf c}_{m} + q{\bf b}_{m}\rangle\cap\mathsf A.
$$


For all $p,q$   
$ A_{p,q}\widehat{B_{m}}C_{p,q}$ is  equal to $ \zeta$. 
Further,  $\bigcap_{p,q}\sigma_{p,q}
= \{\langle {\bf b}_{m} \rangle\}$. 
Next let 
\begin{itemize}
\item[] 
$ \Pi_{p,q}$  be the plane in $\mathbb R^3$ determined by  the
three points  ${\mathbf 0},  A_{p,q}, C_{p,q}$. 

\smallskip 
\item[] $\Lambda_{p,q}$ be the rational line    
  $ \Pi_{p,q} \cap  \mathsf{A}$.
  
  \smallskip 
\item[]   $\Omega_{p,q}\subseteq \mathsf A$  
be  the bisector of the angle
 $A_{p,q}\widehat{B_{m}}C_{p,q}$ 
  of the triangle $\sigma_{p,q} \cap \mathsf{A}$.
\end{itemize}
%
%

%
%
%
%
%
\noindent
For  infinitely many pairs $(p',q')$ of integers  $>0$,
 the   vertical angles of the two intersecting
lines $\Lambda_{p',q'}$ and $\Omega_{p',q'}$
can be made arbitrarily close  to  $\pi/2$. 
Correspondingly,    
\begin{equation}
\label{equation:iv}
\mbox{
$B_{m} \widehat{C_{p',q'} }A_{p',q'} $   and  
$B_{m}\widehat{A_{p',q'}}C_{p',q'} $ get arbitrarily 
close to $(\pi - \zeta)/2  >  \pi/3$.}
\end{equation}
%
%

\medskip
\noindent
Thus  by 
 \eqref{equation:partialcond(i)},
for (infinitely many
pairs of)    integers  $p',q' >0$, 
the  regular cone 
$\sigma_{p',q'} = \langle {\bf a}_m + p'{\bf b}_{m},\,\,\,
{\bf c}_{m} + q'{\bf b}_{m},\,\,\, {\bf b}_{m}\rangle$
intersects $\mathsf A$ in a triangle
having all angles $ > \theta $, and also satisfying
\begin{equation}
\label{equation:conditioni}
\sigma_{p',q'} \cap \Lambda_0 =\emptyset
\,\,\,\mbox{and}\,\,\,\, 
\{A_0,B_0,C_0\}\cap\{A_{p',q'},B_m, C_{p',q'}\}=\emptyset.
\end{equation}
Letting
``relint'' denote relative interior, we automatically have
\begin{equation}
\label{equation:relint}
 \conv(A_{p',q'},B_m, C_{p',q'})\subsetneqq \relint\conv(A_0,B_0,C_0)
 \subseteq \mathcal O.
\end{equation}
We finally introduce
 the following  notational abbreviations: 
$$
 \tau_{n_0}=\sigma_{p',q'}= \langle {\bf a}_m + p'{\bf b}_{m},
 \,\,\, {\bf b}_{m},\,\,\,
{\bf c}_{m} + q'{\bf b}_{m}\rangle =
\langle{\bf a}_{n_0},{\bf b}_{n_0}, {\bf c}_{n_0}\rangle
$$
and
$$
 \tau_{n_0}\cap \mathsf A=
 \conv(A_{n_0},B_{n_0},C_{n_0}),
$$
where
$$
A_{n_0}=\langle {\bf a}_{n_0}\rangle\cap\mathsf A,\,\,\,\,\,
B_{n_0}=\langle {\bf b}_{n_0}\rangle\cap\mathsf A,\,\,\,\,\,
C_{n_0}=\langle {\bf c}_{n_0}\rangle\cap\mathsf A.
$$
Now Step 2 is completed.

\bigskip
\noindent
The finite
path of   binary starrings
$$ \tau_0\mapsto^*\tau_{1}\mapsto^*\dots\mapsto^*\tau_m \mapsto^*
\tau_{m+1}\mapsto^*\dots\mapsto^*
 \tau_{n_0}
 $$
results in  a new  regular cone  $ \tau_{n_0}$
 having the following properties:

\medskip
 \begin{itemize}
 \item[(i)]   $\conv(A_{n_0},B_{n_0},C_{n_0})\cap \Lambda_0
 =\emptyset,$ 
  \,\,\,  by \eqref{equation:conditioni}.

 \smallskip
  \item[(ii)]  $\conv(A_{n_0},B_{n_0},C_{n_0})\subseteq
\relint\conv(A_0,B_0,C_0)$,
 \,\,\,  by \eqref{equation:relint}.

 \smallskip
   \item[(iii)] $\theta < A_{n_0}\widehat{B_{n_0}} C_{n_0}<\pi/3,$
 \,\,\,  by
\eqref{equation:partialcond(i)}. 
 
  \smallskip
    \item[(iv)]
    $B_{n_0}\widehat{A_{n_0}} C_{n_0},$
       $B_{n_0}\widehat{C_{n_0}} A_{n_0}>\pi/3>\theta,$
 \,\,\,  by \eqref{equation:iv}.       
\end{itemize}

\bigskip

\noindent {\it Claim 1.}  
Suppose we are given a regular cone  $\tau_{n_k}$
such that  the triangle 
 $\tau_{n_k}\cap \mathsf A$ is disjoint from
  $\Lambda_1\cup\dots\cup\Lambda_k,$
  has an angle  $\zeta_{n_k}$ with   $\theta<\zeta_{n_k}< \pi/3$, and
has the other two angles  $>\pi/3.$
  Then  a finite path of  binary starrings
  produces from  $\tau_{n_k}$  a regular cone
  $\tau_{n_{k+1}}$ such that the triangle
 $\tau_{n_{k+1}}\cap \mathsf A$
  is disjoint from  $\Lambda_1\cup\dots\cup\Lambda_{k+1},$
is strictly contained in the relative interior of
 $\tau_{n_k}\cap \mathsf A$,
has an angle $\zeta_{n_{k+1}}$ with   $\theta<
\zeta_{n_{k+1}}< \pi/3$, 
and has the other
two angles  $>\pi/3.$

 \smallskip
The proof is by induction on $k=0,1,\dots$, following Steps 1 and 2
with $\tau_{n_k}$ in place of $\tau_0$.
 Since  $\tau_{n_{k+1}}\subseteq \tau_{n_{k}}$, to ensure that
$\tau_{n_{k+1}}\cap \mathsf A$ is disjoint
from
 $\Lambda_1\cup\dots\cup\Lambda_{k+1},$
 it is sufficient to guarantee that it is
 disjoint from $\Lambda_{k+1}$.
(In case  $\Lambda_{k+1}$ contains
 the largest side of the triangle
   $\langle \tau_{n_{k+1}} \rangle\cap \mathsf A$
we perform a preliminary binary 
starring of  $\tau_{n_{k+1}}$ 
as in the preamble above, before taking Steps 1 and 2.)

\bigskip

  

\medskip
\noindent
Having proved our claim,
let us  fix the following notation,
for all $n,k=0,1,\dots$: 
\begin{equation}
\label{equation:t}
T_n =\mbox{ orthogonal 
projection of $\tau_n\cap\mathsf A$ into the plane $z=0$.}
\end{equation}    
\begin{equation}
\label{equation:etad}
 \eta_k =\tau_{n_k}, \,\,\,\,
D_k=\eta_k\cap \mathsf A,\,\,\,\,
E_n=  \mbox{ orthogonal
projection of $D_n$ into  $z=0$.}
\end{equation}
For notational simplicity,
 the dependence on $\theta$ and $\mathcal O$ of
$\tau_n, \eta_k, T_n, D_n, E_n$ is tacitly understood.



\bigskip
\noindent {\it Claim 2.}   
 $\bigcap_l\eta_l$ is a singleton consisting of a
$\theta$-accessible ray  $\rho=\rho_{\mathcal O,\theta}$  
such that the point $\rho \cap \mathsf A$
lies in $\mathcal O$, and the orthogonal projection of 
$\rho \cap \mathsf A$ into the plane $z=0$ is a
rank 3 point.

\smallskip
As a matter of fact,  
 the compactness of each triangle
$\tau_n\cap \mathsf A$ ensures that 
$\bigcap_l\eta_l$ is nonempty.
 For $i=1,2,3$ let
$(p_{ni}/d_{ni}, q_{ni}/d_{ni})$ be the vertices of $T_n$.
Recalling
\eqref{equation:pqr},
from the  regularity of $\tau_n$ it follows that 
$\area(T_n)=1/(2d_{n1}d_{n2}d_{n3})$.
After
each binary  starring
$\tau_j\mapsto^* \tau_{j+1}$ as in
 \eqref{equation:binary},  two
vertices of $T_j$  are also vertices of
$T_{j+1}$.
The denominator of the third vertex
of $T_{j+1}$ is strictly greater
than the denominator of the third vertex of
$T_j.$  Thus
$\area(T_{l}) \downarrow   0.$   
Since the  $E_i$ are a subsequence of the
 $T_j$, it follows that 
$\area(E_{l}) = \area(D_{l}) \downarrow   0.$   
Since by  \eqref{equation:pifourth}
 for every  $n$ all  angles of  
  $D_{n}$, are $>\theta>\pi/4$, elementary
  geometry shows that $\bigcap_lD_l$ is
  a singleton point.
  (With reference to our  remarks in
  the Introduction, no tendency here is possible for
  the $D_n$   to      
degenerate into needle-like triangles---because
the area of $D_n$ controls its diameter.)
Thus $\bigcap_l\eta_l$ is a singleton ray, which
has the desired properties, by Claim 1 and 
\eqref{equation:relint}. In particular, the orthogonal projection of 
$\rho \cap \mathsf A$ into the plane $z=0$ is a
rank 3 point because $\rho \cap \mathsf A$ lies in
no rational line  $\Lambda_n.$ 

%
%
%

 \medskip
Having thus settled Claim 2,  the proof of 
Lemma \ref{lemma:second-bis} is complete.
\end{proof} 

\begin{remark}
\label{remark:first}
{\rm  Perusal of the proof  
of Lemma \ref{lemma:second-bis}
shows that, once
a fixed finite alphabet
$\mathcal A$  is chosen, and strings over $\mathcal A$
representing 
 rational points,  triangles,  and cones 
are equipped with some prescribed lexicographic
order,  then  for every  $\theta<\pi/3$ 
{\it there exists}  a Turing machine  $\mathcal M_\theta$
having the following property:

\smallskip
\begin{quote}
Over any  input  integer $n\geq 0$ 
together with the vertices  of  a  triangle $\overline{\mathcal O}$,
for  $\emptyset \not= \mathcal O\in \mho,$
$\mathcal M_\theta$
outputs the $n$th term $\eta_n$ of a 
sequence  $\eta_0\supsetneqq \eta_1
\supsetneqq,\dots$ of regular cones 
 in $\mathsf V$
 closing down to an irrational
  ray  
which intersects $\mathsf A$ at a point $y$ of $\mathcal O$,
and has the additional property that 
all angles of every
triangle  $\eta_n\cap \mathsf A$ are $>\theta.$ 
\end{quote}

\smallskip
\noindent
Thus the projection of $y$
into the plane $z=0$ is  a  pair of recursively
enumerable real numbers.
In particular, if $\sin(\theta)$ is  rational,
the instructions/quintuples
of  $\mathcal M_\theta$ can be {\it effectively}
written down, following the constructive proof of
 Lemma \ref{lemma:second-bis}.
}
\end{remark}




\section{Proofs of 
 Theorem \ref{theorem:due}, Corollary \ref{corollary:tre} and
 Corollary \ref{corollary:quattro}}
\subsection*{Proof
 of Theorem  \ref{theorem:due}}
Recall the definition of the basis 
$\mho$  given  at the beginning of the proof of
 Lemma \ref{lemma:second-bis}.  
For every nonempty $\mathcal O\in \mho$
and angle  $\theta<\pi/3$,  the proof
constructs 
the  irrational  $\theta$-accessible  ray  
$ \rho_{\mathcal O,\theta}\in \mathcal O$.
Let the
 rank 3 point   $x_{\mathcal O,\theta}\in \mathbb R^2$
 be defined  by 
 $$
 x_{\mathcal O,\theta} =\mbox{
 orthogonal projection of }
 \rho_{\mathcal O,\theta}\cap \mathsf A
 \mbox{ into the plane } z=0.
 $$ 
Let further
$$
\mathcal D_\theta=\{x_{\mathcal O,\theta}\in \mathbb R^2 \mid
\mbox{ for some nonempty } {\mathcal O}\in \mho \}.
$$
By Lemma \ref{lemma:second-bis},
   $\mathcal D_\theta$ is a dense set of rank 3
   points  in $\mathbb R^2$.
   Let   the sequence   
   $ E_{0,\mathcal O, \theta},E_{1,\mathcal O, \theta}, \dots$ be as  
in \eqref{equation:etad}---the 
 dependence on
   $\theta$ and $ \mathcal O$  being now  made explicit.
Let   the
 two-dimensional continued fraction
 expansion
 $\mu_\theta$ be defined as follows:
 \begin{itemize}
 \item[(I)] For each  $x=x_{\mathcal O,\theta}\in \mathcal D_\theta$,
 $\mu_\theta(x)$ is the sequence    
 $ E_{0,\mathcal O, \theta},E_{1,\mathcal O, \theta}, \dots$.
 
 \smallskip  
 \item[(II)] For all other pairs $y\in \mathbb R^2$ of irrationals, 
 $\mu_\theta(y)$ is, e.g.,   the 
 two-dimensional  continued 
 fraction expansion in  \cite{bre-crelle}.
\end{itemize}
For  every  $\epsilon>0$ there is  
 $\theta=\theta(\epsilon)$ so close
 to $\pi/3$ that  the
 two-dimensional continued fraction
 expansion  $\mu_\epsilon=\mu_{\theta(\epsilon)}$ 
 satisfies condition  \eqref{equation:epsilon}
 in Theorem  \ref{theorem:due}. As a matter of fact, 
on the one hand, for all 
   equilateral triangles, the ratio
 between diameter (=side length) and square root
 of area is $2\cdot 3^{-1/4}$. On the other hand,
since  all  angles of $E_{n,\mathcal O, \theta}$ are $>\theta$,
an elementary geometric
argument  ensures that  the  ratio
 between the diameter of each  $E_{n,\mathcal O,\theta}$  
and the square root
 of the area of $E_{n,\mathcal O,\theta}$ is   $\leq 2\cdot 3^{-1/4}+k_\theta,$
 where the constant $k_\theta$ is independent
 of $n$ and $\mathcal O,$ and  tends to zero as
 $\theta$ tends to $\pi/3$ from below.
 
 The proof  of Theorem \ref{theorem:due} is complete.

\subsection*{Proof
 of Corollary  \ref{corollary:tre}}
The proof of the first statement in 
 Corollary \ref{corollary:tre} is a routine variant of the
 proof of  Theorem \ref{theorem:due},
arguing for the special case   $\theta^* =
\arcsin (23^{1/2} / 6)$
 in Lemma \ref{lemma:second-bis},
 and defining  $\mu^*=\mu_{\theta^*}$
 as in (I)-(II) above. 
To verify   \eqref{equation:corollary},
let  $E$ range over the totality $\mathcal E$ of 
triangles whose angles are 
  $\geq \arcsin (23^{1/2} / 6)$.
Then elementary geometry shows that
  the ratio between the diameter of $E$ and
the  square root of the area
of $E$  attains the maximum value
$2\cdot(13/23)^{1/4}$ when $E$ is the isosceles
 triangle with two equal angles of
$\arcsin (23^{1/2} / 6)$ radians; 
for  all other triangles in $\mathcal E$ this
ratio is $<2\cdot(13/23)^{1/4}$. 
The first identity in
\eqref{equation:corollary}
follows from  \eqref{equation:pqr}. 
 
The proof of Corollary  \ref{corollary:tre} is complete.

\bigskip
\subsection*{Proof
 of Corollary  \ref{corollary:quattro}}
Let $\mathsf {k}=2\cdot({13}/{23})^{1/4}$. For all $n
=0,1,\dots$ we can write
without loss of generality
$$
 v^*_n=v^*_{n,1}= (p^*_{n,1}/d^*_{n,1}, q^*_{n,1}/d^*_{n,1}),  \,\,d^*_{n,1}\leq
 d^*_{n,2}\leq d^*_{n,3}.
$$
By \eqref{equation:corollary}
in Corollary
\ref{corollary:tre} we have
\begin{eqnarray*}
\dist(\rho, v^*_{n,1}) &<& d^*_{n,1}\cdot\diameter(E^*_n)\\[0.1cm]
&<& \mathsf {k}\cdot d^*_{n,1} (2d^*_{n,1}d^*_{n,2}d^*_{n,3})^{-1/2}\\[0.1cm] 
&=& \frac{\mathsf {k}(d^*_{n,1})^{1/2}}{(2d^*_{n,2}d^*_{n,3})^{1/2}}\\[0.1cm] 
&\leq& \frac{\mathsf {k}}{(2d^*_{n,3})^{1/2}}.
\end{eqnarray*}
By construction,  $\lim_{n\to\infty}d^*_{n,3}\to \infty$, whence
$\lim_{n\to\infty}\dist(\Lambda, v^*_{n,1})=0$, as desired
to prove that the sequence of primitive integer vectors
$(p^*_{n,1},q^*_{n,1},d^*_{n,1})$ strongly converges to
the ray
$\rho$  through $(\alpha,\beta,1)$.
Correspondingly, the vertices  $v^*_{0}, v^*_{1},\dots$ 
strongly converge to the rank 3 point $(\alpha,\beta)=x$. 

\begin{remark}
\label{remark:second}
{\rm It is instructive enough to 
compare Corollary  \ref{corollary:quattro}  with
\cite[Theorem 4.1]{gra} stating 
that  for  all $n=2,3,\dots,$ no  $n$-dimensional Farey continued fraction algorithm is strongly convergent. 

Both in our approach here  and in \cite{gra}
the unimodular property, 
the effective
computability of approximating sequences
 of points,  the rank of points, their denominators,   
Farey mediants, 
  stellar operations
  and two-dimensional 
continued fraction expansions have a basic role.  

However,  our algorithmic approach is  local:
we study the geometric properties of
sequences of  
triangles closing down to {\it a single    point}  $x\in \mathbb R^2$, just as 
 the classical   continued
fraction algorithm does by providing a
 sequence of rational segments 
 whose vertices converge to
  a single point  $y\in [0,1]$.
  

By contrast, 
following the time-honored tradition of 
\cite[\S 8]{hur}  
and many other papers (see \cite{nog2} and
references therein),
the subject matter of \cite{gra} 
 is the generalization of 
the Farey sequence and its variants,
\cite{ste}. 
So these papers deal with
two-dimensional 
continued fraction expansions
arising from  
sequences of finer and finer triangulations
$\nabla_n$
of a given fixed  domain $D$,  in such a way that
the denominator of each vertex of
each triangle of $\nabla_n$  is $\leq n$,
and
for {\it every point}  $x\in D$ there is a sequence of
triangles $T_{x,0}\in \nabla_0,\,\,\,\, T_{x,1}\in \nabla_1,
\dots$ whose intersection
is the singleton $\{x\}$.
 }
\end{remark}

\end{document}